\newtheorem{theorem}{Theorem}
\newtheorem{corollary}[theorem]{Corollary}
\newtheorem{lemma}[theorem]{Lemma}
\newtheorem{proposition}[theorem]{Proposition}
\newtheorem{remark}[theorem]{Remark}
\begin{document}

\title[The reachable space of the heat equation]{The reachable space of the heat equation for a finite rod as a Reproducing Kernel Hilbert Space}

\author{Marcos L\'{o}pez-Garc\'{\i}a}
 \subjclass[2010]{35K05, 93B03, 46E22}
 \keywords{Reachable space, heat equation, reproducing kernel Hilbert space}
\thanks{
The author was partially supported by project PAPIIT IN100919 of DGAPA, UNAM}
\email{marcos.lopez@im.unam.mx}
\address{
Instituto de Matem\'{a}ticas-Unidad Cuernavaca \\
   Universidad Nacional Aut\'{o}noma de M\'{e}xico\\
   Apdo. Postal 273-3, Cuernavaca Mor. CP 62251, M\'exico}

\maketitle
\begin{abstract}
We use some results from the theory of Reproducing Kernel Hilbert Spaces to show that the reachable space of the heat equation for a finite rod with either one or two Dirichlet boundary controls is a RKHS of analytic functions on a square, and we compute its reproducing kernel. We also show that the null reachable space of the heat equation for the half line with Dirichlet boundary data is a RKHS of analytic functions on a sector, whose reproducing kernel is (essentially) the sum of pullbacks of the Bergman and Hardy kernels on the half plane $\mathbb{C}^+$. We also consider the case with Neumann boundary data.
\end{abstract}

\section{Introduction}
Let $T>0$ fixed.  Consider the following control system
\begin{eqnarray}\label{principal}
\partial_t w-\partial_{xx} w=0, && 0<x<1, \, 0<t<T,  \notag  \\ 
w(0,t)=u_{\ell}(t), \quad w(1,t)=u_r(t), && 0<t<T ,\\
w(x,0)=0, &&  0<x<1,\notag
\end{eqnarray}
which models the temperature propagation in a rod with length 1.\\

In control theory is an important issue to describe the so-called null reachable space, at time $T>0$, defined as follows
$$\mathcal{R}_T:=\{ w(\cdot,T): w \text{ is solution of system (\ref{principal}) with controls }u_{\ell}, u_{r}\in L^2_{\mathbb{C}}(0,T)\}.$$
Using the null controllability of the system (\ref{principal}) in any positive time (see \cite[Theorem 3.3]{fatto}), one can show that the set of states $w(\cdot,T)$ reached by solutions of system (\ref{principal}) from any initial datum $w(x,0)\in L^2(0,1)$ coincides with $\mathcal{R}_T$. The null controllability also implies that $\mathcal{R}_T$ does not depend on $T>0$ (see a proof in \cite{hart}), thus $\mathcal{R}$ will denote this space.\\

The problem is to identify the space of all analytic extensions of the functions in (some subspace of) $\mathcal{R}$ in terms of spaces of analytic functions with some structure.\\

From \cite[Theorem 1.1]{darde} and \cite[Theorem 2.1]{martin} we have that 
$$hol(\overline{Q}) \subset \mathcal{R} \subset hol(Q),$$
where $Q=\{(x,y)\in \mathbb{R}^2: |y|<x, \, |y|<1-x \}$ and $hol(\overline{Q})$ is the set of all analytic functions on a neighborhood of $\overline{Q}$. Hence these results established the domain of analyticity to deal with.\\

In \cite{hart} was proved that
$$E^2(Q)\subset \mathcal{R} \subset A^2(Q),$$
where $E^2(Q)$ is the Hardy-Smirnov space on $Q$ and $A^2(Q)$ is the (unweighted) Bergman space on $Q$. Thus, in this work well-known spaces of analytic functions with some structure appeared for the first time.\\

In \cite{orsini} the author has proved that the null reachable space $\mathcal{R}$ is the sum of two Bergman spaces on sectors (whose intersection is Q), i.e.
$$\mathcal{R}=A^2(\Delta)+A^2(1-\Delta),$$
where $\Delta:=\{z\in \mathbb{C}:|\arg z|<\pi/4\}$.\\

As the author has remarked, given any function $f\in hol(Q)$, how can we write $f$ as a sum of two functions in those different Bergman spaces? Notice that $A^2(\Delta)$ and $A^2(1-\Delta)$ are RKHSs on different domains.\\

In this work our main result (Theorem \ref{masmenos}) shows that the null reachable space $\mathcal{R}$ is the sum of two RKHSs on the same domain $Q$, therefore $\mathcal{R}$ is a RKHS on $Q$ (by properties of the RKHSs). Corollary 2.1 in \cite[page 98]{saitohnuevo} gives a necessary and sufficient condition so that a function $f\in hol(Q) $ can be in $\mathcal{R}$.

\section{Statement of the results}
To get the characterization of $\mathcal{R}$ as a RKHS on $Q$,
we proceed in several steps. We characterize the subspaces in $\mathcal{R}$ corresponding to the cases either $u_r=0$ or  $u_\ell=0$ or $u_r=-u_\ell$ or $u_r=u_\ell$ as RKHSs of analytic functions on a square.\\

In \cite{darde} the authors ask for a characterizacion of the null reachable space, at time  $T>0$, with just one Dirichlet boundary control. So we consider the null reachable space, at time  $T>0$,
$$\mathcal{R}_T^{\ell}:=\{ w(\cdot,T): w \text{ is solution of system (\ref{principal}) with }u_{\ell}\in L^2_{\mathbb{C}}(0,T), u_{r}\equiv 0\}$$
with one Dirichlet boundary control on the left. As before, we can see that the space $\mathcal{R}_T^{\ell}$ does not depend on $T>0$, so $\mathcal{R}^\ell$ will denote this space.\\

Motivated by the idea in \cite{hart} of writing the solution of system (\ref{principal}) in terms of integral operators having well known heat kernels (see \ref{opintegral}), and by using the characterization of the image of a linear mapping as a RKHS (see \cite[page 134]{saitohnuevo}) we have obtained the characterization of $\mathcal{R}^\ell$ as a RKHS on a square.\\

First, we introduce some notation and definitions. Consider the square $D:=\left\{(x,y)\in \mathbb{R}^2:|y| <x, |y| <2-x \right\}$, the open set $D^\ell_\infty:=\bigcup_{n\in \mathbb{Z}}(2n+D)$ and the following positive definite function on the sector $\Delta$,
$$\mathcal{K}_0(z,w;T):= \frac{z\overline{w}}{\pi}  e^{-\frac{z^2+\overline{w}^2}{4T}}\left(\frac{1}{ (z^2+\overline{w}^2)^2} + \frac{1}{ 4 T\left(z^2+\overline{w}^2\right)}\right), \quad z,w \in \Delta.$$

\begin{theorem}\label{main}
For each $T>0$ fixed,  we have that 
$$\mathcal{R}^\ell=\{f\in hol(D^\ell_\infty):f(z+2)=f(z)= -f(-z), f|_D\in \mathcal{H}^\ell_T(D) \}$$
where $\mathcal{H}^\ell_T(D)$
is the RKHS of analytic functions on $D$ with reproducing kernel
\begin{equation}\label{kernelazo}
\mathcal{K}_\ell(z,w;T):= \sum_{m,n\in \mathbb{Z}}\mathcal{K}_0(z+2n,w+2m;T), \quad z,w \in D.
\end{equation}
The space $\mathcal{H}^\ell_T(D)$ is endowed with the norm given in (\ref{norma1}).
\end{theorem}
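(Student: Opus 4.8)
The plan is to realize the final-time state map as an integral operator on $L^2_{\mathbb C}(0,T)$ and then invoke the representation of the image of a linear map as a RKHS (\cite[p.~134]{saitohnuevo}). First I would write the solution of (\ref{principal}) with $u_r\equiv 0$ by the method of images, starting from the Green function of the rod with homogeneous Dirichlet data,
$$G(x,\xi,\tau)=\frac{1}{\sqrt{4\pi\tau}}\sum_{n\in\mathbb Z}\left[e^{-(x-\xi-2n)^2/(4\tau)}-e^{-(x+\xi-2n)^2/(4\tau)}\right],$$
so that the state driven by the left control is the single-layer term $w(x,t)=-\int_0^t\partial_\xi G(x,\xi,t-s)\big|_{\xi=0}\,u_\ell(s)\,ds$. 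Evaluating the $\xi$-derivative at $\xi=0$ collapses the two families of Gaussians and yields, at $t=T$, an operator $(L_Tu_\ell)(x)=\int_0^T K_T(x,s)\,u_\ell(s)\,ds$ whose kernel is the lattice sum $K_T(x,s)=\sum_{n\in\mathbb Z}\kappa(x-2n,s)$, with $\kappa(x,s)=c\,x\,(T-s)^{-3/2}e^{-x^2/(4(T-s))}$ the half-line boundary kernel and $c$ a normalizing constant. Each $\kappa(\cdot-2n,s)$ is entire in the space variable and the Gaussian factor controls the sum on the region $|y|<x$; I would therefore continue $x\mapsto z$ and check that $L_Tu_\ell$ extends analytically to $D$, defining $\mathcal H^\ell_T(D):=L_T\!\left(L^2_{\mathbb C}(0,T)\right)$.

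Second, writing $(L_Tu_\ell)(z)=\langle u_\ell,\overline{K_T(z,\cdot)}\rangle_{L^2(0,T)}$ puts $L_T$ into the form covered by Saitoh's theory, so its image $\mathcal H^\ell_T(D)$ is automatically a RKHS on $D$, the range (quotient) norm is exactly the norm (\ref{norma1}), and the reproducing kernel is
$$\mathcal K_\ell(z,w;T)=\int_0^T K_T(z,s)\,\overline{K_T(w,s)}\,ds=\sum_{m,n\in\mathbb Z}\int_0^T\kappa(z-2n,s)\,\overline{\kappa(w-2m,s)}\,ds.$$
It then remains to evaluate $\mathcal K_0(z,w;T)=\int_0^T\kappa(z,s)\overline{\kappa(w,s)}\,ds$ in closed form: after the substitutions $\tau=T-s$ and $u=1/\tau$ this reduces to the elementary integral $\int_{1/T}^\infty u\,e^{-au}\,du=e^{-a/T}\bigl(a^{-1}T^{-1}+a^{-2}\bigr)$ with $a=(z^2+\overline w^2)/4$, which, after reindexing $n\mapsto-n$, $m\mapsto-m$, produces exactly (\ref{kernelazo}) with $\mathcal K_0$ as stated (the overall constant being fixed by the normalization of $\kappa$). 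Convergence of the double series on compact subsets of $D$ again follows from the Gaussian factor $e^{-(z^2+\overline w^2)/(4T)}$, whose exponent has positive real part precisely because $|y|<x$ on $D$.

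Finally I would pin down the analytic/symmetry description of $\mathcal R^\ell$ directly from the lattice structure of $K_T$, which is cleaner than arguing through the boundary traces. The reflection $x\mapsto-x$ together with the reindexing $n\mapsto-n$ gives $K_T(-x,s)=-K_T(x,s)$, and the shift $x\mapsto x+2$ with $n\mapsto n-1$ gives $K_T(x+2,s)=K_T(x,s)$; hence every $f=L_Tu_\ell$ extends analytically to $D^\ell_\infty=\bigcup_{n}(2n+D)$ and satisfies $f(z+2)=f(z)=-f(-z)$, these being exactly the reflections that encode the homogeneous condition $w(1,t)\equiv0$ and build the rod Green function from the half-line one. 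Conversely, any $f$ analytic on $D^\ell_\infty$ with these symmetries and $f|_D\in\mathcal H^\ell_T(D)$ equals $L_Tu_\ell$ on $D$ for some $u_\ell$ by definition of $\mathcal H^\ell_T(D)$, and the symmetries force agreement on all of $D^\ell_\infty$, so $f$ is reachable; the $T$-independence of $\mathcal R^\ell$ recorded in the introduction then shows the description is independent of the fixed $T$. I expect the main obstacle to be analytic rather than algebraic: justifying the termwise analytic continuation of the image sum onto the sharp domain $D$, interchanging the $s$-integral with the double lattice sum (i.e. controlling $\sum_n\kappa(z-2n,s)$ uniformly on compacta of $D$), and confirming that the range norm furnished by Saitoh's theorem genuinely coincides with (\ref{norma1}); once these are in place the closed-form evaluation of $\mathcal K_0$ is routine.
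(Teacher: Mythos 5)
Your proposal follows essentially the same route as the paper: realize the state at time $T$ as an integral operator on $L^2_{\mathbb{C}}(0,T)$ whose kernel is the lattice sum $-2(\partial_x\theta)(\cdot,T-\tau)$ (your image-sum $K_T$ is exactly this theta-kernel after reindexing), apply Saitoh's theorem on the range of a linear map to obtain the RKHS structure, the norm (\ref{norma1}) and the kernel as an inner product of kernel sections, evaluate $\mathcal{K}_0$ by the same elementary integral, and read off the $2$-periodicity and oddness from the lattice symmetry of the kernel. The analytic points you flag as the main obstacles --- termwise analytic continuation onto $D$, uniform control of $\sum_n (\partial_x K)(z+2n,\cdot)$ on compacta, and interchanging the time integral with the double lattice sum --- are precisely what the paper's Lemma \ref{estheta} and Proposition \ref{convergiendo} establish, so your sketch matches the paper's proof in both strategy and the supporting estimates it requires.
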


Notice that the properties of 2-periodicity, oddness and analyticity domain of $f\in \mathcal{R}^\ell$ are inherited from those of the analytic extension of the heat kernel $(\partial_x\theta)(x,t)$, see Remarks \ref{perimpar}, \ref{paraprincipal}. \\

We also have the corresponding result for the null reachable space with one Dirichlet boundary control on the right, defined as follows
$$\mathcal{R}_T^r:=\{ w(\cdot,T): w \text{ is solution of system (\ref{principal}) with } u_{\ell}\equiv 0,u_{r}\in L^2_{\mathbb{C}}(0,T)\}.$$
\begin{theorem}\label{derecho}
Let $D^r_{\infty}=-1+D^\ell_{\infty}$. For each $T>0$ fixed,  we have that 
$$\mathcal{R}^r=\{f\in hol(D^r_\infty):f(z+2)=f(z)=- f(-z), f|_{-1+D}\in \mathcal{H}^r_T(-1+D) \}$$
where $\mathcal{H}^r_T(-1+D)$
is the RKHS of analytic functions on $-1+D$ with reproducing kernel
\begin{equation}\label{derechazo}
\mathcal{K}_r(z,w;T)= \sum_{m,n\in \mathbb{Z}}\mathcal{K}_0(z+2n+1,w+2m+1;T), \quad z,w \in -1+D.
\end{equation}
The space $\mathcal{H}^r_T(-1+D)$ is endowed with the norm given in (\ref{norma1.5}).
\end{theorem}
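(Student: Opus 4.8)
The plan for Theorem~\ref{derecho} is to deduce it from Theorem~\ref{main} by exploiting the spatial symmetry of the control system~(\ref{principal}). The key observation is that the change of variables $x \mapsto 1-x$ interchanges the roles of the two endpoints: if $w(x,t)$ solves~(\ref{principal}) with controls $u_\ell, u_r$, then $\widetilde{w}(x,t) := w(1-x,t)$ solves the same heat equation with boundary data $\widetilde{w}(0,t) = u_r(t)$ and $\widetilde{w}(1,t) = u_\ell(t)$. Consequently, setting $u_\ell \equiv 0$ produces exactly the states reached with only a right control for the original system, and we obtain the relation $\mathcal{R}^r = \{\, g : g(x) = f(1-x) \text{ for some } f \in \mathcal{R}^\ell \,\}$ at the level of states on $(0,1)$. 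Passing to analytic extensions, this reflection $z \mapsto 1-z$ is precisely what carries the square $D$ to the square $-1+D$ (after accounting for the periodicity), which motivates the stated translation by $1$ in the kernel formula~(\ref{derechazo}).

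First I would verify the state-level reflection identity rigorously, checking that $\widetilde w$ indeed satisfies the homogeneous initial condition and the correct boundary conditions, so that $w \mapsto \widetilde w$ is a bijection of solution families. Next I would track how the reflection acts on the structural constraints in Theorem~\ref{main}: the $2$-periodicity $f(z+2)=f(z)$ is preserved under any integer translation, and the oddness $f(z)=-f(-z)$ combined with the reflection $z\mapsto 1-z$ should again yield an odd, $2$-periodic function, confirming that the symmetry conditions defining $\mathcal{R}^r$ on $D^r_\infty = -1+D^\ell_\infty$ have the same form. Then I would transport the reproducing kernel: if $\Phi(z) = 1-z$ (or the appropriate translation $z \mapsto z+1$ aligning the fundamental domains), the pullback kernel $\mathcal{K}_\ell(\Phi(z),\Phi(w);T)$ is the reproducing kernel of the image space $\mathcal{H}^r_T(-1+D)$, and a direct index shift $n \mapsto n$, using the evenness of $\mathcal{K}_0$ in its translation argument, should convert the double sum in~(\ref{kernelazo}) into~(\ref{derechazo}) with the shifts $2n+1$, $2m+1$.

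The main obstacle I anticipate is bookkeeping the interaction between the reflection $z \mapsto 1-z$ and the periodic/parity structure so that the kernel lands on the correctly shifted lattice $\{2n+1\}$ rather than $\{2n\}$ or $\{2n-1\}$, and verifying that $\mathcal{K}_0(z,w;T)$ transforms compatibly. Because $\mathcal{K}_0$ depends on $z,\overline w$ only through $z^2+\overline w^2$ and the prefactor $z\overline w$, I would compute the effect of $z \mapsto z+1$ (the net translation after reflection modulo the periodicity) on these combinations and confirm the index relabelling is consistent. A cleaner alternative, avoiding reflection entirely, is to repeat the proof of Theorem~\ref{main} verbatim with the integral operator built from the heat kernel appropriate to the right boundary control; this replaces the geometric argument with a parallel computation. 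I would present the reflection argument as the primary route since it reuses Theorem~\ref{main} directly, noting that the norm~(\ref{norma1.5}) is defined so as to make $w \mapsto \widetilde w$ an isometry, which is what ultimately certifies that $\mathcal{H}^r_T(-1+D)$ is the claimed RKHS.
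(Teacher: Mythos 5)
Your proposal is correct, but its primary route is genuinely different from the paper's. The paper's own proof (given as a sketch) is exactly what you relegate to the ``cleaner alternative'': it repeats the proof of Theorem~\ref{main} with the operator $(\mathcal{L}_T^r u)(z)=\langle u,\widetilde h_z\rangle_{L^2_{\mathbb{C}}(0,T)}$, where $\widetilde h_z(t)=\overline{(\partial_x\theta)(z+1,T-t)}$, observes via (\ref{opintegral}) and the $2$-periodicity of $\partial_x\theta$ (Remark~\ref{perimpar}) that $w(z,T)=2(\mathcal{L}_T^r u_r)(z)$ on $-1+D$, and then invokes Theorem~\ref{saitochi} to obtain the kernel $\langle \widetilde h_w,\widetilde h_z\rangle_{L^2_{\mathbb{C}}(0,T)}=\mathcal{K}_r(z,w;T)$ and the norm (\ref{norma1.5}). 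Your reflection argument is a valid alternative, and the checks you anticipate do go through: $\widetilde w(x,t)=w(1-x,t)$ swaps the controls; $1-D=-1+D$ and $1-D^\ell_\infty=D^r_\infty$ because $2-D=D$; the functional equations transport ($g(z)=f(1-z)$ is again $2$-periodic and odd when $f$ is); and the kernel pulls back correctly, since the evenness $\mathcal{K}_0(-z,-w;T)=\mathcal{K}_0(z,w;T)$ gives $\mathcal{K}_0(1-z+2n,1-w+2m;T)=\mathcal{K}_0(z+2(-n-1)+1,\,w+2(-m-1)+1;T)$, so the reindexing is $n\mapsto -n-1$, $m\mapsto -m-1$ (not $n\mapsto n$ as you wrote --- a slip you flag but should fix), turning $\mathcal{K}_\ell(1-z,1-w;T)$ into $\mathcal{K}_r(z,w;T)$. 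The norm identification also holds because the operators intertwine: $(\mathcal{L}_T^r u)(z)=-(\mathcal{L}_T^\ell u)(1-z)$ by oddness and periodicity of $\partial_x\theta$, so the infima defining (\ref{norma1}) and (\ref{norma1.5}) coincide and reflection is an isometry of $\mathcal{H}^\ell_T(D)$ onto $\mathcal{H}^r_T(-1+D)$. In terms of trade-offs: the paper's direct construction requires nothing beyond Lemma~\ref{estheta} and Theorem~\ref{saitochi} and produces (\ref{norma1.5}) immediately, while your symmetry argument reuses Theorem~\ref{main} as a black box and explains conceptually why the right kernel is the unit translate of the left one, at the cost of the kernel and norm transport bookkeeping.
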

By (\ref{opintegral}) and Theorem \ref{main} we also have
$$\mathcal{R}^r=\{f\in hol(D^r_\infty): f(z+2)=f(z), f(-z)=-f(z), f(\cdot -1)|_{D}\in \mathcal{H}_\ell^T(D) \}.$$

\bigskip

Our approach also provides the characterization of the following subspaces in $\mathcal{R}$,
$$\mathcal{R}^+_T:=\{ w(\cdot,T): w \text{ is solution of system (\ref{principal}) with }u_{\ell}\in L^2_{\mathbb{C}}(0,T), u_{r}=-u_\ell\},$$
$$\quad \text{and } \mathcal{R}^-_T:=\{ w(\cdot,T): w \text{ is solution of system (\ref{principal}) with }u_{\ell}\in L^2_{\mathbb{C}}(0,T), u_{r}=u_\ell\}.$$

\bigskip

Once again, (2.14), (2.15) and Theorem 3.3 in \cite{fatto} imply the null controllability of system (\ref{principal}) with initial datum $w(x,0)\in L^2(0,1)$ and the controls $u_\ell, u_r\in L^2_{\mathbb{C}}(0,T)$ satisfying either $u_r=-u_\ell$ or $u_r=u_\ell$. Thus, the spaces $\mathcal{R}^+_T,\mathcal{R}^-_T$ do not depend on $T>0$.
\begin{theorem}\label{laterales}
Let $Q_\infty:=\bigcup_{n\in \mathbb{Z}}(n+Q)$. For each $T>0$ fixed we have that
\begin{enumerate}
\item $\mathcal{R}^+=\{f\in hol(Q_\infty): f(z+1)=f(z)=-f(-z), f|_Q\in \mathcal{H}^+_T(Q) \}$
where $\mathcal{H}^+_T(Q)$ is the RKHS on $Q$ with reproducing kernel 
\begin{eqnarray*}
\mathcal{K}_+(z,w;T)&:=&\sum_{m,n\in \mathbb{Z}}\mathcal{K}_0(z+n,w+m;T)\\
&=&\mathcal{K}_\ell(z,w;T)+\mathcal{K}_\ell(z+1,w+1;T)+\mathcal{K}_\ell(z+1,w;T)+\mathcal{K}_\ell(z,w+1;T),
\end{eqnarray*}
for $z,w\in Q$. The space $\mathcal{H}^+_T(Q)$ is endowed with the norm given in (\ref{norma2}).
\item $\mathcal{R}^-=\{f\in hol(Q_\infty): -f(z+1)=f(z)=- f(-z), f|_Q\in \mathcal{H}^-_T(Q) \}$
where $\mathcal{H}^-_T(Q)$ is the RKHS on $Q$ with reproducing kernel 
$$\mathcal{K}_-(z,w;T):=\mathcal{K}_\ell(z,w;T)+\mathcal{K}_\ell(z+1,w+1;T)-\mathcal{K}_\ell(z+1,w;T)-\mathcal{K}_\ell(z,w+1;T),$$           
 for $z,w\in Q$. The space $\mathcal{H}^-_T(Q)$ is endowed with the norm given in (\ref{norma3}).
\end{enumerate}
\end{theorem}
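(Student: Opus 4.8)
The plan is to reduce Theorem~\ref{laterales} to Theorem~\ref{main} by a superposition-and-reflection argument, and then to read off the kernels from Saitoh's realization of the range of a bounded linear map as a RKHS (\cite[p.~134]{saitohnuevo}). Fix $u\in L^2_{\mathbb C}(0,T)$ and let $g:=L_\ell u\in\mathcal R^\ell$ be the state produced by (\ref{principal}) with $u_\ell=u$, $u_r\equiv0$; by Theorem~\ref{main} this $g$ is $2$-periodic and odd and $g|_D\in\mathcal H^\ell_T(D)$. By linearity, the states produced with controls $(u_\ell,u_r)=(u,-u)$ and $(u,u)$ equal $g$ plus the contribution of the right control alone. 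The reflection $x\mapsto1-x$ sends the right-control problem to the left-control problem, so a control $v$ on the right contributes $(L_\ell v)(1-z,T)$ at $t=T$; taking $v=-u$ and $v=u$ and using oddness $g(1-z)=-g(z-1)$ together with $2$-periodicity $g(z-1)=g(z+1)$, I obtain on $Q$
\begin{equation*}
w_+(z,T)=g(z)+g(z+1),\qquad w_-(z,T)=g(z)-g(z+1).
\end{equation*}
Since $z\in Q$ implies $z,z+1\in D$, both summands are genuine restrictions to $Q$ of functions in $\mathcal H^\ell_T(D)$.

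Next I would compute the reproducing kernels. Writing the integral representation (\ref{opintegral}) behind Theorem~\ref{main} as $(L_\ell u)(z)=\int_0^T h_\ell(z,s)\,u(s)\,ds$, so that $\mathcal K_\ell(z,w;T)=\int_0^T h_\ell(z,s)\overline{h_\ell(w,s)}\,ds$, the maps above are $w_\pm(\cdot,T)=L_\pm u$ with kernels $h_\pm(z,s)=h_\ell(z,s)\pm h_\ell(z+1,s)$. By Saitoh's theorem the range of $L_\pm$ is a RKHS on $Q$ with kernel $\int_0^T h_\pm(z,s)\overline{h_\pm(w,s)}\,ds$; expanding the product gives the four-term expressions for $\mathcal K_+$ and $\mathcal K_-$ in the theorem, and the closed form $\mathcal K_+(z,w;T)=\sum_{m,n\in\mathbb Z}\mathcal K_0(z+n,w+m;T)$ follows by regrouping the double lattice sum defining $\mathcal K_\ell$, splitting each index in $\mathbb Z$ into its even and odd parts. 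The same theorem supplies the norm as the minimal $L^2_{\mathbb C}(0,T)$-norm over preimages, which I would match to the explicit formulas (\ref{norma2}), (\ref{norma3}).

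It then remains to pin down the function-theoretic side on $Q_\infty$. From $w_\pm(z)=g(z)\pm g(z+1)$ and the $2$-periodicity and oddness of $g$ I would check directly that $w_+$ is $1$-periodic and odd while $w_-$ satisfies $w_-(z+1)=-w_-(z)$ and is odd, which are exactly the functional equations defining the two spaces. Analyticity on $Q_\infty=\bigcup_n(n+Q)$ is inherited from $g\in hol(D^\ell_\infty)$ together with the (anti)periodicity, so restriction to $Q$ is a bijection between the analytic functions on $Q_\infty$ obeying the stated symmetries and the range of $L_\pm$; this identification is what promotes the abstract RKHS $\mathcal H^\pm_T(Q)$ to the claimed description of $\mathcal R^\pm$.

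The main obstacle I anticipate is organizing the computation so that each kernel is produced by a \emph{single} operator $L_\pm$ rather than by a sum of two RKHSs. Presenting $w_\pm$ as the image of one map $L_\pm\colon L^2_{\mathbb C}(0,T)\to hol(Q)$ is precisely what makes $\mathcal K_\pm$ come out as the naive combination of four translates of $\mathcal K_\ell$, carrying Saitoh's minimal-norm; the alternative viewpoint, writing $\mathcal H^\pm_T(Q)$ as a sum of the spaces generated by $g|_Q$ and $g(\cdot+1)|_Q$, would force the more delicate infimal-convolution norm and is better avoided here. A secondary technical point is verifying that $z\mapsto g(z+1)$ really is a restriction to $Q$ of an $\mathcal H^\ell_T(D)$-function, which uses $z+1\in D$ for $z\in Q$ together with the $2$-periodic odd continuation furnished by Theorem~\ref{main}; the positive definiteness of the four-term kernels on $Q$ is then automatic, since they arise as Gram kernels $\int_0^T h_\pm\overline{h_\pm}$.
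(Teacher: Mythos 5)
Your proposal is correct and takes essentially the same route as the paper: the paper likewise substitutes $u_r=\mp u_\ell$ into (\ref{opintegral}) to present $w(\cdot,T)$ as the image of a \emph{single} operator $\mathcal{L}^{\pm}_T$ with kernel $h_z\pm h_{z+1}$ (for the $+$ case recognized as the period-one theta function $\partial_x\widetilde{\theta}$), then applies Theorem \ref{saitochi} to obtain the RKHS structure, the four-term kernel expansion, and the minimal-preimage norms (\ref{norma2}), (\ref{norma3}). The only cosmetic differences are that you reach $w_\pm=g(z)\pm g(z+1)$ by a reflection argument from Theorem \ref{main} instead of directly from the integral representation, and you recover the lattice-sum form of $\mathcal{K}_+$ by regrouping indices rather than computing it first and then expanding.
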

As a consequence, we get a description of the null reachable space $\mathcal{R}$.
\begin{theorem}\label{masmenos}
We have that 
$$\mathcal{R}=\mathcal{R}^++\mathcal{R}^- \text{ and }\mathcal{R}^+\cap \mathcal{R}^- =\{0\}.$$ 
Moreover, $\mathcal{R}|_Q:=\{f|_Q:f\in \mathcal{R}\}=\mathcal{H}^+_T(Q)+\mathcal{H}^-_T(Q)$ with reproducing kernel
$$\mathcal{K}_\ell(z,w;T)+\mathcal{K}_\ell(z+1,w+1;T),\quad z,w\in Q.$$
The space $\mathcal{R}|_Q$ is endowed with the norm given in (\ref{sumaesp}).
\end{theorem}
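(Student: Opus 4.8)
The plan is to establish the three assertions of Theorem \ref{masmenos} in order — the set decomposition $\mathcal{R}=\mathcal{R}^++\mathcal{R}^-$, the triviality of the intersection, and the identification of the reproducing kernel of the restricted sum — relying on Theorem \ref{laterales} for the structure of the summands and on the sum/image machinery for RKHSs from \cite{saitohnuevo}.

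First I would obtain $\mathcal{R}=\mathcal{R}^++\mathcal{R}^-$ from the linearity of the control-to-state map alone. Given controls $u_\ell,u_r\in L^2_{\mathbb C}(0,T)$, write
\[
(u_\ell,u_r)=\tfrac12(u_\ell-u_r)\,(1,-1)+\tfrac12(u_\ell+u_r)\,(1,1),
\]
so that the solution of (\ref{principal}) splits as the sum of the solution driven by the antisymmetric pair $(a,-a)$, $a=\tfrac12(u_\ell-u_r)$, and the one driven by the symmetric pair $(b,b)$, $b=\tfrac12(u_\ell+u_r)$; evaluating at $t=T$ places $w(\cdot,T)$ in $\mathcal{R}^++\mathcal{R}^-$, while the reverse inclusion is immediate from $\mathcal{R}^\pm\subset\mathcal{R}$. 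For directness I would use only the periodicity relations of Theorem \ref{laterales}: on $Q_\infty$ every $f\in\mathcal{R}^+$ satisfies $f(z+1)=f(z)$ and every $f\in\mathcal{R}^-$ satisfies $f(z+1)=-f(z)$, so $f\in\mathcal{R}^+\cap\mathcal{R}^-$ forces $f(z)=f(z+1)=-f(z)$, i.e. $f\equiv0$. Hence $\mathcal{R}=\mathcal{R}^+\oplus\mathcal{R}^-$.

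Next I would pass to restrictions. Since the decomposition $f=f^++f^-$ is unique, $f|_Q=f^+|_Q+f^-|_Q$ gives $\mathcal{R}|_Q=\mathcal{H}^+_T(Q)+\mathcal{H}^-_T(Q)$ immediately; the delicate point is that this sum remains \emph{direct} on the single square $Q$. I would secure this by observing that any $f\in\mathcal{R}$ is odd and $2$-periodic (inherited from the summands, both of which are $2$-periodic and odd) and that $-Q=-1+Q$: thus the values on $Q$ determine $f$ on $-1+Q$ by oddness and on every $2n+Q$ and $2n-1+Q$ by periodicity, so the restriction map $\mathcal{R}\to\mathcal{R}|_Q$ is injective. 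Consequently, if $g\in\mathcal{H}^+_T(Q)\cap\mathcal{H}^-_T(Q)$, extending $g$ to $f^+\in\mathcal{R}^+$ and $f^-\in\mathcal{R}^-$ gives $(f^+-f^-)|_Q=0$, whence $f^+=f^-$ on $Q_\infty$ and $f^+\in\mathcal{R}^+\cap\mathcal{R}^-=\{0\}$; therefore $\mathcal{H}^+_T(Q)\cap\mathcal{H}^-_T(Q)=\{0\}$.

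Finally, for the reproducing kernel I would invoke the RKHS sum theorem of \cite{saitohnuevo}, by which $\mathcal{H}^+_T(Q)+\mathcal{H}^-_T(Q)$ is a RKHS with kernel $\mathcal{K}_++\mathcal{K}_-$ and, the intersection being trivial, the decomposition is unique. Using the formulas of Theorem \ref{laterales} the cross terms $\pm\mathcal{K}_\ell(z+1,w)\pm\mathcal{K}_\ell(z,w+1)$ cancel, leaving $\mathcal{K}_+(z,w;T)+\mathcal{K}_-(z,w;T)=2\bigl(\mathcal{K}_\ell(z,w;T)+\mathcal{K}_\ell(z+1,w+1;T)\bigr)$. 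I expect the genuine subtlety to lie exactly in the factor $2$: the naive orthogonal-sum norm $\|f^+\|^2+\|f^-\|^2$ yields twice the stated kernel, so the normalization in (\ref{sumaesp}) must be tracked carefully. The clean way to pin it down is to recognize $\mathcal{R}|_Q$ as the image RKHS of the combined operator $(u_\ell,u_r)\mapsto w(\cdot,T)$ of (\ref{opintegral}): writing it as $L_\ell u_\ell+L_r u_r$, its kernel is $L_\ell L_\ell^\ast+L_r L_r^\ast$, that is $\mathcal{K}_\ell(z,w;T)+\mathcal{K}_r(z,w;T)$, and since on $Q$ one has $\mathcal{K}_r(z,w;T)=\mathcal{K}_\ell(z+1,w+1;T)$ by the $2$-periodicity of $\mathcal{K}_\ell$, this is precisely the asserted kernel and fixes (\ref{sumaesp}). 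The discrepancy with $\mathcal{K}_++\mathcal{K}_-$ is accounted for by the $\sqrt2$ carried by the isometric embedding $a\mapsto(a,-a)$ of the control space, which is the one computation I would still have to carry out in full.
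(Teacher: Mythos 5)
Your proposal is correct, and its core is the same as the paper's: the paper's entire proof consists of the identity $-w(z,T)=\mathcal{L}_T^+[u_\ell-u_r](z)+\mathcal{L}_T^-[u_\ell+u_r](z)$, which is exactly your symmetric/antisymmetric splitting of the controls, followed by the sum-of-RKHS theorem from \cite{saitohnuevo}, which yields the kernel $\mathcal{K}_++\mathcal{K}_-=2\bigl(\mathcal{K}_\ell(z,w;T)+\mathcal{K}_\ell(z+1,w+1;T)\bigr)$ and the norm (\ref{sumaesp}); the triviality of $\mathcal{R}^+\cap\mathcal{R}^-$ is disposed of by the functional equations, just as you do. You add two things worth comparing. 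First, the directness on the single square, $\mathcal{H}^+_T(Q)\cap\mathcal{H}^-_T(Q)=\{0\}$, is neither proved nor needed in the paper (Saitoh's sum theorem does not require the sum to be direct); your argument for it is correct, and your injectivity-of-restriction step (oddness, $2$-periodicity, $-Q=-1+Q$) is genuinely necessary there, because $Q_\infty$ is a \emph{disjoint} union of squares, so the identity theorem alone could not upgrade $(f^+-f^-)|_Q=0$ to $f^+=f^-$ on $Q_\infty$. Second, the factor $2$ you single out as the main subtlety is real, but it is an inconsistency between the paper's statement and its own proof rather than a gap in your argument: the paper's proof also lands on $2\bigl(\mathcal{K}_\ell(z,w;T)+\mathcal{K}_\ell(z+1,w+1;T)\bigr)$ while the theorem statement drops the $2$, and since $K$ and $cK$ ($c>0$) reproduce the same vector space with proportional norms, only normalization is at stake. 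Be aware that your proposed reconciliation via the combined operator does not remove the discrepancy either: by (\ref{opintegral}) the solution map is $(u_\ell,u_r)\mapsto -2\mathcal{L}_T^\ell u_\ell+2\mathcal{L}_T^r u_r$, so Theorem \ref{saitochi} assigns its range the kernel $4\bigl(\mathcal{K}_\ell(z,w;T)+\mathcal{K}_r(z,w;T)\bigr)$ rather than $\mathcal{K}_\ell+\mathcal{K}_r$; the deferred ``$\sqrt{2}$ computation'' can therefore only confirm that all these kernels are positive scalar multiples of one another and define the same space $\mathcal{R}|_Q$, not make the constants agree.
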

Clearly, the condition $\mathcal{R}^+\cap \mathcal{R}^- =\{0\}$ follows from the functional equations in the last result.
\bigskip

Next we consider the case with Neumann boundary data.
\begin{eqnarray}\label{flujo}
\partial_t v-\partial_{xx} v=0, && 0<x<\infty, \, 0<t<T,  \notag  \\ 
(\partial_x v)(t,0)=u_\ell(t),\quad (\partial_x v)(t,1)=u_r(t), && 0<t<T ,\\
v(0,x)=0, && 0<x<\infty.\notag
\end{eqnarray}
We set 
$$\mathcal{R}_T^N:=\{ v(\cdot,T): v \text{ is solution of system (\ref{flujo}) with }u_\ell,u_r\in L^2_{\mathbb{C}}(0,T)\}.$$
\begin{corollary}\label{neumann}
We have that $\mathcal{R}^N_T$ does not depend on $T>0$, and
$$\mathcal{R}^N=\{f\in hol(Q_\infty):f'\in \mathcal{R} \}.$$
\end{corollary}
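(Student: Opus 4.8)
The plan is to deduce the corollary from the already-established description of $\mathcal{R}$ (Theorem \ref{masmenos}) by exploiting the fact that spatial differentiation intertwines the two boundary value problems. The key observation is that $v$ solves the Neumann system (\ref{flujo}) with controls $u_\ell,u_r\in L^2_{\mathbb{C}}(0,T)$ if and only if $w:=\partial_x v$ solves the Dirichlet system (\ref{principal}) with the \emph{same} controls: differentiating the heat equation in $x$ preserves it, the flux conditions $(\partial_x v)|_{x=0}=u_\ell$ and $(\partial_x v)|_{x=1}=u_r$ become the Dirichlet conditions $w|_{x=0}=u_\ell$, $w|_{x=1}=u_r$, and the vanishing initial datum of $v$ gives that of $w$. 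This correspondence is injective on solutions with zero initial datum, since $\partial_x v\equiv 0$ forces $v$ to be independent of $x$, and an $x$-independent solution of the heat equation with zero initial value vanishes.

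First I would record the easy inclusion. If $f=v(\cdot,T)\in\mathcal{R}^N$, then $f'=(\partial_x v)(\cdot,T)=w(\cdot,T)\in\mathcal{R}$, and the analytic continuation of $f$ to $Q_\infty$ is obtained by integrating that of $w$ (as given by the representation (\ref{opintegral})); hence $\mathcal{R}^N\subseteq\{f\in hol(Q_\infty):f'\in\mathcal{R}\}$. For the reverse inclusion, take $f\in hol(Q_\infty)$ with $g:=f'\in\mathcal{R}$. Using that $\mathcal{R}$ is independent of $T$, write $g=w(\cdot,T)$ for some Dirichlet solution $w$ on $[0,T]$, and reconstruct a Neumann solution by
\[
v(x,t):=\int_0^x w(s,t)\,ds+\int_0^t (\partial_x w)(0,\tau)\,d\tau ,
\]
a direct computation showing that $v$ solves (\ref{flujo}) with zero initial datum and $\partial_x v=w$, so that $v(\cdot,T)\in\mathcal{R}^N$ is a primitive of $g$. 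Consequently $f$ and $v(\cdot,T)$ are two primitives of the same $g$ and differ only by an additive constant.

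The hard part is precisely this constant: because $\partial_x$ annihilates constants, the passage from $v(\cdot,T)$ to $g$ loses the information carried by the integration constant, so to finish the reverse inclusion I must show that $\mathcal{R}^N$ contains the constant functions (and, using the functional equations inherited by $g=f'$, that $f-v(\cdot,T)$ is a single constant rather than varying from one translate of $Q$ to another). I would establish reachability of constants from the heat-balance identity obtained by integrating the equation over the rod and over $[0,T]$ and using $v(\cdot,0)=0$, namely
\[
\int_0^1 v(\cdot,T)\,dx=\int_0^T\big(u_r(t)-u_\ell(t)\big)\,dt ,
\]
whose left-hand side is the spatial average of the terminal state: since the net boundary flux on the right can be prescribed freely, any average is attainable, and combining this with the null controllability of (\ref{flujo}) — the analogue for the Neumann system of \cite[Theorem 3.3]{fatto} used for (\ref{principal}) — one can steer $0$ to an arbitrary constant profile in time $T$. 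As $\mathcal{R}^N$ is a linear space, adding such a reachable constant to $v(\cdot,T)$ yields $f$, which completes the inclusion $\{f\in hol(Q_\infty):f'\in\mathcal{R}\}\subseteq\mathcal{R}^N$.

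Finally, the $T$-independence of $\mathcal{R}^N_T$ comes for free once the characterization is proved for each fixed $T$: its right-hand side $\{f\in hol(Q_\infty):f'\in\mathcal{R}\}$ does not involve $T$, because $\mathcal{R}$ is itself independent of $T$, and therefore neither does $\mathcal{R}^N_T$. Alternatively, $T$-independence follows directly from the null controllability of (\ref{flujo}), exactly as in the Dirichlet case.
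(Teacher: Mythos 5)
Your core observation --- that $\partial_x$ intertwines the Neumann system (\ref{flujo}) with the Dirichlet system (\ref{principal}) under the \emph{same} controls --- is exactly the paper's proof; in fact the paper's entire proof is that single remark, which establishes only the inclusion $\mathcal{R}^N\subseteq\{f\in hol(Q_\infty):f'\in\mathcal{R}\}$ and leaves the reverse inclusion and the $T$-independence implicit. Your proposal supplies precisely what the paper omits: the reconstruction of a Neumann solution from a Dirichlet one, the identification of the additive constant as the real obstruction, and the reachability of constants via the flux-balance identity $\int_0^1 v(\cdot,T)\,dx=\int_0^T(u_r-u_\ell)\,dt$ combined with null controllability of the Neumann-controlled system (equivalently: constants are equilibria with zero flux, so reaching the constant $k$ from $0$ amounts to steering $-k$ to $0$). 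That controllability result is classical Fattorini--Russell material but is not stated in the paper, so you are importing a fact the paper never sets up; the citation is nevertheless legitimate, and in this respect your argument is more complete than the paper's own.

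Two technical points need repair. First, your formula $v(x,t)=\int_0^x w(s,t)\,ds+\int_0^t(\partial_x w)(0,\tau)\,d\tau$ uses the boundary trace $(\partial_x w)(0,\cdot)$, which for $u_\ell,u_r\in L^2_{\mathbb{C}}(0,T)$ need not exist as an integrable function: differentiating the representation (\ref{opintegral}) produces the kernel $(\partial_{xx}\theta)(x,s)$, whose singularity at $x=0$, $s\to 0^+$ is of order $s^{-3/2}$, not locally integrable against $L^2$ controls. The clean fix is to avoid the trace altogether: let $v$ be the solution of (\ref{flujo}) with the same controls (it exists via the analogous theta-kernel representation), observe that $\partial_x v$ solves (\ref{principal}) with the same data, and conclude $\partial_x v=w$ by uniqueness for the Dirichlet problem; this exhibits $v(\cdot,T)\in\mathcal{R}^N$ as a primitive of $g$ with no regularity issue. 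Second, your parenthetical claim that the functional equations satisfied by $g=f'$ force $f-v(\cdot,T)$ to be a \emph{single} constant cannot be established, because it is false: $Q_\infty$ is a disjoint union of open squares meeting only at corner points, hence disconnected, and the set $\{f\in hol(Q_\infty):f'\in\mathcal{R}\}$ is stable under adding arbitrary locally constant functions, which satisfy no functional equations at all. The corollary therefore has to be read as an identity between spaces of functions on $(0,1)$ (equivalently, of restrictions to the component $Q$); under that reading only the one constant on $Q$ matters, and your argument closes correctly --- but as a statement about canonical extensions to all of $Q_\infty$ the asserted equality would literally fail, a defect of the statement itself rather than of your proof.
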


\bigskip

In some situations, the null reachable space at time $T>0$ of a certain heat equation can be described in terms of well known analytic functions spaces. For instance, consider the heat equation for the half line,
\begin{eqnarray}\label{basico}
\partial_t v-\partial_{xx} v=0, && 0<x<\infty, \, 0<t<T,  \notag  \\ 
v(0,t)=u(t), && 0<t<T ,\\
v(x,0)=0, && 0<x<\infty.\notag
\end{eqnarray}
Its corresponding null reachable space at time $T>0$ is given by
$$\mathcal{R}_T^q:=\{ v(\cdot,T): v \text{ is solution of system (\ref{basico}) with }u\in L^2_{\mathbb{C}}(0,T)\}.$$

As usual, $\Re z$, $\Im z$ denote the real and the imaginary parts of $z\in \mathbb{C}$. 
Let $\mathbb{C}^+=\{z\in \mathbb{C}: \Re z >0\}$ be the positive half space. The following result characterizes the null reachable space $\mathcal{R}_T^q$ as a RKNS whose reproducing kernel is (essencially) a sum of pullbacks of the Bergman and Hardy kernels on $\mathbb{C}^+$.
\begin{theorem}\label{iniciando}
We have that $\mathcal{R}_T^q$ does not depend on $T>0$, and
$$\mathcal{R}^q=e^{-z^2} A^2(\Delta)+ze^{-z^2}\left\{f\circ \varphi | f\in H^2(\mathbb{C}^+)  \right\},$$
where $\varphi(z)=z^2, z\in \Delta$, $A^2(\Delta)$ is the unweighted Bergman space on $\Delta$  and $H^2(\mathbb{C}^+)$ is the Hardy space on $\mathbb{C}^+$. The space $\mathcal{R}^q$ is endowed with the norm given in (\ref{sumanorm}).
\end{theorem}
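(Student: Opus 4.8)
The plan is to realize $\mathcal{R}^q_T$ as the range of an explicit integral operator on $L^2_{\mathbb{C}}(0,T)$, to read off its reproducing kernel from Saitoh's description of the image of a linear map, and then to split that kernel into two summands that are conformal pullbacks to $\Delta$ of the Bergman and Szeg\H{o} kernels of $\mathbb{C}^+$. First I would write the solution of (\ref{basico}) through the single boundary-layer heat kernel for the half line: for $u\in L^2_{\mathbb{C}}(0,T)$,
$$v(x,T)=\int_0^T\frac{x}{\sqrt{4\pi\tau^3}}\,e^{-x^2/(4\tau)}\,g(\tau)\,d\tau,\qquad g(\tau):=u(T-\tau).$$
Replacing $x$ by $z$, the factor $e^{-z^2/(4\tau)}$ forces convergence exactly when $\Re(z^2)>0$, i.e. on $z\in\Delta$, and there $\tau\mapsto\phi_z(\tau):=\tfrac{z}{\sqrt{4\pi\tau^3}}e^{-z^2/(4\tau)}$ lies in $L^2(0,T)$. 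Hence $\mathcal{R}^q_T$ is the range of the bounded map $Lg(z)=\int_0^T g(\tau)\phi_z(\tau)\,d\tau=\langle g,\overline{\phi_z}\rangle$, and by the characterization of the image of a linear mapping as a RKHS (\cite[p.~134]{saitohnuevo}) it is a RKHS whose reproducing kernel is $K(z,w)=\int_0^T\overline{\phi_w(\tau)}\,\phi_z(\tau)\,d\tau$. A one-line substitution $u=1/\tau$ evaluates this integral and gives $K(z,w)=4\,\mathcal{K}_0(z,w;T)$ (the same kernel governing the finite rod, now without the lattice sum since the half line has a single boundary), which splits as
$$K(z,w)=\frac{4z\overline{w}}{\pi(z^2+\overline{w}^2)^2}\,e^{-(z^2+\overline{w}^2)/(4T)}+\frac{z\overline{w}}{\pi T(z^2+\overline{w}^2)}\,e^{-(z^2+\overline{w}^2)/(4T)}=:K_1(z,w)+K_2(z,w).$$

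The heart of the argument is to recognize the two summands. With $\varphi(z)=z^2$ mapping $\Delta$ biholomorphically onto $\mathbb{C}^+$, the transformation rule for Bergman kernels gives $K_\Delta(z,w)=\varphi'(z)\overline{\varphi'(w)}K_{\mathbb{C}^+}(z^2,w^2)=\tfrac{4z\overline{w}}{\pi(z^2+\overline{w}^2)^2}$ from $K_{\mathbb{C}^+}(p,q)=\tfrac{1}{\pi(p+\overline{q})^2}$; thus $K_1$ is the reproducing kernel of $e^{-z^2/(4T)}A^2(\Delta)$, the image of $A^2(\Delta)$ under multiplication by the nonvanishing weight $e^{-z^2/(4T)}$. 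For the second piece I would start from the Szeg\H{o} kernel $S_{\mathbb{C}^+}(p,q)=\tfrac{1}{2\pi(p+\overline{q})}$ and check that $K_2$ is a positive multiple of $z\overline{w}\,e^{-(z^2+\overline{w}^2)/(4T)}\,S_{\mathbb{C}^+}(z^2,w^2)$, i.e. the reproducing kernel of $ze^{-z^2/(4T)}\{f\circ\varphi:f\in H^2(\mathbb{C}^+)\}$, realized as the isometric image of $H^2(\mathbb{C}^+)$ under the injective map $f\mapsto z\,e^{-z^2/(4T)}(f\circ\varphi)$. Note the extra factor $z$ (rather than $\sqrt{z}$) is exactly what upgrades the $\sqrt{z\overline{w}}$ of the genuine Hardy pullback to the $z\overline{w}$ appearing in $K_2$. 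The sum theorem for reproducing kernels (\cite{saitohnuevo}) then identifies the RKHS with kernel $K=K_1+K_2$ as the sum of these two spaces, endowed with the norm (\ref{sumanorm}).

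It remains to pass to the normalization in the statement and to justify that $\mathcal{R}^q$ is $T$-independent. The parabolic self-similarity $v(x,T)=V(x/\sqrt{T})$ with $V\in\mathcal{R}^q_1$ shows that the spaces for different $T$ are dilates of one another, and normalizing the representative produces the weight $e^{-z^2}$. I expect this to be the main obstacle, and the only genuinely delicate point: the two summands $e^{-z^2/(4T)}A^2(\Delta)$ and $ze^{-z^2/(4T)}(H^2\circ\varphi)$ are individually sensitive to $T$ (their elements decay like $e^{-\Re(z^2)/(4T)}$), so $T$-independence cannot come from either factor alone and must be extracted from the way the Bergman and Hardy components are traded against one another under dilation---equivalently, from controlling the mutual inclusions of the spaces attached to different $T$. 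Making that exchange precise is where the real work lies, whereas the kernel computation and the conformal identifications of $K_1$ and $K_2$ above are essentially mechanical once the feature map $\phi_z$ and the map $\varphi$ are in hand.
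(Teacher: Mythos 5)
Your first two paragraphs are, up to normalizing constants, exactly the paper's proof: the same feature map (the paper's $h^q_z$), the same appeal to Saitoh's image-of-a-linear-map theorem (Theorem \ref{saitochi}), the same kernel splitting $K=K_1+K_2$, and the same identification of $K_1$ as the weighted Bergman pullback (Remark \ref{pullback}) and of $K_2$ as the weighted Hardy/Szeg\H{o} pullback (Lemma \ref{hardpull}), followed by the sum-of-kernels theorem with the norm (\ref{sumanorm}). That part is correct and needs no changes.

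The step you postponed --- and honestly flagged as ``where the real work lies'' --- is a genuine gap, and moreover it cannot be closed: the $T$-independence asserted in the theorem is false. What the kernel computation actually proves is the $T$-dependent identity
$$\mathcal{R}^q_T=e^{-z^2/(4T)}A^2(\Delta)+ze^{-z^2/(4T)}\mathcal{H}_\varphi(\Delta)=:X_T,$$
and the dilation $\psi(z)=T^{-1/2}z$ carries $X_1$ onto $X_T$, not onto itself (dilations preserve $A^2(\Delta)$ and $H^2(\mathbb{C}^+)$ but change the Gaussian weight). The paper's own proof stumbles at precisely this point: it passes from ``$f\mapsto f\circ\psi$ is an isometric isomorphism of $X_1$ onto $\mathcal{R}(\mathcal{L}^q_T)$'' to ``$\mathcal{R}(\mathcal{L}^q_T)=X_1$ as a vector space,'' which is a non sequitur; a symptom of the same confusion is that the theorem's statement carries the weight $e^{-z^2}$ (i.e.\ $T=1/4$) while the proof ends with $e^{-z^2/4}$ (i.e.\ $T=1$), and these are different spaces.

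To see that no trading between the Bergman and Hardy summands can rescue the claim, use growth along the positive real axis. Cauchy--Schwarz applied to (\ref{abcd}) (or, equivalently, the pointwise kernel bounds $|g(x)|\leq C\|g\|_{A^2(\Delta)}/x$ for Bergman functions and $|h(x^2)|\leq C\|h\|_{H^2(\mathbb{C}^+)}/x$ for Hardy functions) shows that \emph{every} element of $X_T$ is $O\bigl(e^{-x^2/(4T)}\bigr)$ as $x\to+\infty$, and this bound applies to each summand of any decomposition separately, so no cancellation is available. But $f(z)=e^{-z^2/4}(z+1)^{-2}$ belongs to $X_1$, since $(z+1)^{-2}\in A^2(\Delta)$, and $e^{-x^2/4}(x+1)^{-2}$ is not $O\bigl(e^{-x^2/(4T)}\bigr)$ for any $T<1$; hence $f\in X_1\setminus X_T$. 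Combined with the trivial inclusion $\mathcal{R}^q_T\subseteq\mathcal{R}^q_{T'}$ for $T\leq T'$ (pad the control with zero), this shows $\mathcal{R}^q_T$ is \emph{strictly} increasing in $T$, which is consistent with the known failure of null controllability for the heat equation on the half line (unlike the finite rod, where null controllability is what makes $\mathcal{R}_T$ independent of $T$). So the correct conclusion of your computation --- and of the paper's --- is $\mathcal{R}^q_T=X_T$ with the $T$-dependent weight; your instinct that the exchange of components under dilation is the crux was exactly right, but the resolution is that the exchange is impossible and the $T$-independence claim must be dropped.
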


This paper is organized as follows. In the next section we introduce notation, give some results about RKHSs, and make the computations needed to prove the results. In Section \ref{probando} we provide the proofs.

\section{Preliminaries}
In this section we use some results about the one dimensional heat equation that can be found in \cite{cannon}. First, consider the heat kernel on the upper half plane $\mathbb{R}^2_+:=\{(x,t)\in \mathbb{R}^2: t>0 \}$ given as follows
$$K(x,t):=\frac{1}{\sqrt{4\pi t} }e^{-\frac{x^2}{4t}}, \quad (x,t)\in \mathbb{R}^2_+.$$
In order to describe the solution $w(x,t)$ of system (\ref{principal}) we introduce the so-called theta function 
$$\theta(x,t):=\sum_{n\in \mathbb{Z}}K(x+2n,t), \quad (x,t)\in \mathbb{R}^2_+,$$
so we have the system (\ref{principal}) admits a unique solution $w\in C([0,\infty),W^{-1,2}(0,1))$ given by (see \cite[Theorem 6.3.1]{cannon}, \cite{hart})
\begin{equation}\label{opintegral}
w(x,t)=-2\int_0^t (\partial_x \theta)(x,t-\tau)u_{\ell}(\tau)d\tau+2\int_0^t (\partial_x \theta)(x-1,t-\tau)u_r(\tau)d\tau.
\end{equation}

For an open set $\Omega \subset \mathbb{C}$, the (unweighted) Bergman space on $\Omega$ is the vector space
$$A^2(\Omega):=\{f: \Omega  \rightarrow \mathbb{C} | f \text{ analytic on }\Omega \text{ and } f\in L^2(\Omega)\}$$
endowed with the inner product
$$\langle f,g\rangle_{A^2(\Omega)}:=\frac{1}{\pi}\int_{\Omega}f(z)\overline{g(z)}dxdy.$$
We also consider the Hardy space on the half space $\mathbb{C}^+$,
$$H^2(\mathbb{C}^+):=\left\{f:\mathbb{C}^+\rightarrow \mathbb{C}|f \text{ analytic on }\mathbb{C}^+ \text{ and } \sup_{x>0}\int_{-\infty}^{\infty}|f(x+iy)|^2dy <\infty\right\}$$
endowed with the inner product
$$\langle f,g\rangle_{H^2(\mathbb{C}^+)}:=\sup_{x>0}\int_{-\infty}^{\infty}f(x+iy)\overline{g(x+iy)} dy.$$

Consider the following positive definite functions on $\Delta$
$$\mathcal{K}_1(z,w):=\frac{4z\overline{w}}{ (z^2+\overline{w}^2)^2}, \quad  \mathcal{K}_2(z,w):=\frac{1}{ z^2+\overline{w}^2}, \quad z,w\in \Delta,$$
and the biholomorphism $\varphi(z)=z^2$ from $\Delta$ onto $\mathbb{C}^+$.\\

\begin{remark}\label{pullback}
Notice that $\mathcal{K}_1(z,w)=\mathcal{K}_B(\varphi(z),\varphi(w))\varphi '(z)\overline{\varphi '(w)}$ where $\mathcal{K}_B(z,w)$ is the reproducing kernel for the Bergman space $A^2(\mathbb{C}^+)$, so that $\mathcal{K}_1(z,w)$ is the reproducing kernel for the Bergman space $A^2(\Delta)$ (see \cite[page 12]{duren}).
\end{remark}

The following result shows that $\mathcal{K}_2(z,w)$ is the reproducing kernel for the pullback space, induced by the function $\varphi$, of the Hardy space $H^2(\mathbb{C}^+)$.
\begin{lemma}\label{hardpull}
$\mathcal{K}_2(z,w)$ is the reproducing kernel for the RKHS $\mathcal{H}_{\varphi}(\Delta):=\{f\circ\varphi:f\in H^2(\mathbb{C}^+)\}$.
\end{lemma}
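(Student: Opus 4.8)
The plan is to recognize this as a pullback-of-RKHS statement and to realize $\mathcal{H}_\varphi(\Delta)$ as the image of $H^2(\mathbb{C}^+)$ under the composition operator $C_\varphi f := f\circ\varphi$, then invoke the general principle that the image of a Hilbert space under a linear map is a RKHS whose kernel is read off from the map (\cite[p.~134]{saitohnuevo}). Since $\varphi(z)=z^2$ is a biholomorphism of $\Delta$ onto $\mathbb{C}^+$, the operator $C_\varphi$ is a bijection from $H^2(\mathbb{C}^+)$ onto $\mathcal{H}_\varphi(\Delta)$; transporting the Hardy inner product (up to the normalizing constant fixed below) along this bijection turns $\mathcal{H}_\varphi(\Delta)$ into a Hilbert space of analytic functions on $\Delta$. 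For each $z\in\Delta$ the evaluation $F\mapsto F(z)=f(\varphi(z))$ factors through the bounded evaluation of $H^2(\mathbb{C}^+)$ at the point $\varphi(z)\in\mathbb{C}^+$, so point evaluations on $\mathcal{H}_\varphi(\Delta)$ are bounded and the space is a RKHS.

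To identify its kernel I would first recall the reproducing kernel of $H^2(\mathbb{C}^+)$. Writing, via the Paley--Wiener/Laplace representation, each $f\in H^2(\mathbb{C}^+)$ as $f(z)=\int_0^\infty a(t)e^{-zt}\,dt$ with $a\in L^2(0,\infty)$, Plancherel's theorem gives $\|f\|_{H^2(\mathbb{C}^+)}^2=2\pi\|a\|_{L^2(0,\infty)}^2$, and the reproducing identity $f(w)=\langle f,k_w\rangle_{H^2(\mathbb{C}^+)}$ forces the density of $k_w$ to be $\tfrac{1}{2\pi}e^{-\overline{w}t}$, whence
\begin{equation*}
\mathcal{K}_H(z,w)=\frac{1}{2\pi(z+\overline{w})},\qquad z,w\in\mathbb{C}^+.
\end{equation*}
(Alternatively one may transport the disc kernel through a Cayley map.)

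Finally I would combine the two steps. For the composition operator the reproducing kernel of the image space is the pullback of $\mathcal{K}_H$ along $\varphi$, namely $K(z,w)=\mathcal{K}_H(\varphi(z),\varphi(w))$: taking $h(z):=\mathcal{K}_H(\cdot,\varphi(z))\in H^2(\mathbb{C}^+)$ one has $\langle f,h(z)\rangle_{H^2}=f(\varphi(z))=(C_\varphi f)(z)$, so that $K(z,w)=\langle h(w),h(z)\rangle_{H^2}=\mathcal{K}_H(\varphi(z),\varphi(w))$. Since $\overline{\varphi(w)}=\overline{w^2}=\overline{w}^2$, this equals $\frac{1}{2\pi(z^2+\overline{w}^2)}$, which is exactly $\mathcal{K}_2(z,w)$ once $\mathcal{H}_\varphi(\Delta)$ is endowed with $\tfrac{1}{2\pi}$ times the transported Hardy inner product. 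The only delicate point—indeed the main obstacle—is the consistent bookkeeping of the constant $2\pi$: it must be tracked from the very definition of $\langle\cdot,\cdot\rangle_{H^2(\mathbb{C}^+)}$ through Plancherel so that the stated kernel $\mathcal{K}_2$, carrying no prefactor, is obtained; this is precisely the normalization that must be built into the inner product on $\mathcal{H}_\varphi(\Delta)$.
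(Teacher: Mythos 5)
Your proof is correct, and at its core it is the same argument as the paper's: realize $\mathcal{H}_{\varphi}(\Delta)$ as the image of $H^2(\mathbb{C}^+)$ under composition with the biholomorphism $\varphi$, transport the inner product along this bijection, and read off the reproducing kernel as the pullback $\mathcal{K}_H(\varphi(z),\varphi(w))$. The differences are in execution. The paper verifies that $\bigcap_{p\in\Delta}\ker(ev_{\varphi(p)})=\{0\}$ and then invokes the pullback theorem (Theorem 2.9 of Saitoh--Sawano) as a black box, whereas you re-derive that theorem in this special case from the image-of-a-linear-map principle (Theorem \ref{saitochi}) by taking $\mathbf{h}(z)=\mathcal{K}_H(\cdot,\varphi(z))$, so that $\mathbf{L}f=f\circ\varphi$ and the kernel of the image space is $\langle \mathbf{h}(w),\mathbf{h}(z)\rangle_{H^2(\mathbb{C}^+)}=\mathcal{K}_H(\varphi(z),\varphi(w))$; this is logically equivalent, just slightly longer. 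More substantively, you compute $\mathcal{K}_H(z,w)=\frac{1}{2\pi(z+\overline{w})}$ explicitly via Paley--Wiener, which the paper never does, and this care pays off: with the paper's stated (unnormalized) Hardy inner product, the pullback kernel is $\frac{1}{2\pi}\mathcal{K}_2$, not $\mathcal{K}_2$, so the lemma holds verbatim only if the inner product on $\mathcal{H}_{\varphi}(\Delta)$ absorbs a factor of $\frac{1}{2\pi}$, exactly as you arrange. The paper's equation (\ref{normpull}) carries no such factor, so the paper is internally off by $2\pi$ at this point; the slip is harmless for identifying $\mathcal{H}_{\varphi}(\Delta)$ as a set (rescaling a kernel by a positive constant does not change the underlying vector space), and hence for the vector-space identity in Theorem \ref{iniciando}, but it does affect the norm bookkeeping in (\ref{sumanorm}). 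Your normalization is the consistent one.
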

\begin{proof}
Here $\mathcal{K}_H$ stand for the reproducing kernel for $H^2(\mathbb{C}^+)$. Let $ev_z:H^2(\mathbb{C}^+)\rightarrow \mathbb{C}$ be the functional evaluation at $z\in \mathbb{C}^+$. If $g\in \bigcap_{p\in \Delta}\ker(ev_{\varphi(p)})$ then $g\equiv 0$, so Theorem 2.9 in \cite[page 81]{saitohnuevo} implies that the RKHS with reproducing kernel $\mathcal{K}_H(\varphi(z),\varphi(w))=\mathcal{K}_2(z,w)$ is the space $\mathcal{H}_{\varphi}(\Delta)$ equipped with the inner product
\begin{equation}\label{normpull}
\langle f\circ \varphi,g\circ \varphi\rangle_{\mathcal{H}_{\varphi}(\Delta)}=\langle f,g\rangle_{H^2(\mathbb{C}^+)}.
\end{equation}
 \end{proof}
 For each $t>0$ fixed, consider the entire function
$$(\partial_x K) (z,t):=-\frac{z}{4\sqrt{\pi}t^{3/2}}e^{-\frac{z^2}{4t}},\quad z\in \mathbb{C},$$ 
which is the analytic extension of the function $(\partial_x K) (x,t)$, $x\in \mathbb{R}$.
\begin{lemma}\label{estheta}
\begin{enumerate}[label=\roman*)]
\item For each $t>0$ fixed, the function
\begin{equation}\label{corazon}
(\partial_x\theta)(z,t):=\sum_{n \in \mathbb{Z}}  (\partial_x K) (z+2n,t)
\end{equation}
is holomorphic on $D$ and continuous on $\overline{D}$.
\item For each compact set $\mathcal{F}\subset D$ and $t>0$, there exists a constant $C_{\mathcal{F},t}>0$ such that
\begin{equation}\label{normafinita}
\sum_{n \in \mathbb{Z}}  \left(\int_0^t  \left |( \partial_x K )(z+2n,t-\tau) \right |^2  d\tau \right)^{1/2} \leq C_{\mathcal{F},t} \quad\text{for all } z\in \mathcal{F}.
\end{equation}
\end{enumerate}
\end{lemma}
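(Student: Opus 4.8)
The plan is to reduce both parts to the single elementary identity $\Re((z+2n)^2)=(x+2n)^2-y^2$ (with $z=x+iy$), which gives the pointwise bound
$$|(\partial_x K)(z+2n,t)| = \frac{|z+2n|}{4\sqrt{\pi}\,t^{3/2}}\,e^{-((x+2n)^2-y^2)/(4t)},$$
and then to dominate the relevant series by a fast-decaying (essentially Gaussian) comparison series. For part i) I would invoke the Weierstrass theorem: since each summand $(\partial_x K)(z+2n,t)$ is entire, it suffices to prove uniform convergence on the compact set $\overline{D}$, which yields continuity on $\overline{D}$ and, a fortiori, uniform convergence on compact subsets of $D$ and hence holomorphy on $D$. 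On $\overline{D}$ one has $0\le x\le 2$ and $|y|\le 1$, so $e^{y^2/(4t)}\le e^{1/(4t)}$ and $|z+2n|\le 3+2|n|$; moreover $(x+2n)^2\ge |n|^2$ as soon as $|n|\ge 2$. Thus the $n$-th term is bounded, uniformly in $z\in\overline D$, by a constant times $(1+|n|)\,e^{-|n|^2/(4t)}$, and the M-test delivers absolute uniform convergence.

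For part ii) I would first record that for $z\in D$ the quantity $a_n:=(x+2n)^2-y^2$ is strictly positive for \emph{every} $n$: the cases $n=0$ and $n=-1$ are exactly the defining inequalities $|y|<x$ and $|y|<2-x$, while for all other $n$ one has $|x+2n|\ge 2>|y|$. Using $\Re((z+2n)^2)=a_n$ again, the change of variable $s=t-\tau$ turns the integral into $\frac{|z+2n|^2}{16\pi}\int_0^t s^{-3}e^{-a_n/(2s)}\,ds$. The key device is to peel off a $z$-independent-exponent decay factor by writing $e^{-a_n/(2s)}\le e^{-a_n/(4t)}e^{-a_n/(4s)}$ for $0<s\le t$, after which the remaining integral is dominated by $\int_0^\infty s^{-3}e^{-a_n/(4s)}\,ds=16/a_n^2$. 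This produces the clean bound
$$\Bigl(\int_0^t|(\partial_x K)(z+2n,t-\tau)|^2\,d\tau\Bigr)^{1/2}\le \frac{|z+2n|}{\sqrt{\pi}\,a_n}\,e^{-a_n/(8t)}.$$
(In fact the integral equals $\mathcal{K}_0(z+2n,z+2n;t)$ exactly, which is how this lemma meshes with the definition of $\mathcal{K}_0$.)

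To finish I would split the sum. For the three central indices $n\in\{-1,0,1\}$ each summand is a continuous function of $z$ that is finite throughout $D$; here the positivity $a_0,a_{-1}>0$ forces the use of the hypothesis $\mathcal{F}\subset D$ compact (not merely $\mathcal F\subset\overline D$), and continuity on the compact $\mathcal{F}$ then bounds these terms. For $|n|\ge 2$ the uniform estimates $a_n\ge |n|^2/2$ and $|z+2n|\le 5|n|$, valid on all of $\overline D$, bound the summand by $\tfrac{10}{\sqrt\pi\,|n|}e^{-|n|^2/(16t)}$ uniformly in $z$, and this tail is summable. Adding the two contributions yields the constant $C_{\mathcal{F},t}$.

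The main obstacle is not the convergence itself, which is Gaussian-fast, but retaining enough exponential decay in part ii): naively extending the $s$-integral to $[0,\infty)$ before extracting any decay would give only $|z+2n|/a_n\sim|n|^{-1}$, whose sum diverges. The splitting $e^{-a_n/(2s)}\le e^{-a_n/(4t)}e^{-a_n/(4s)}$ is precisely what preserves the Gaussian factor $e^{-a_n/(8t)}$ required for summability. The secondary point to track carefully is the positivity of $a_n$: it holds for all $n$ on $D$, yet only the terms with $|n|\ge 2$ admit a uniform lower bound on $\overline D$, which is exactly why compactness of $\mathcal{F}$ inside the open set $D$ is needed to handle the central terms.
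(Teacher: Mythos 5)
Your proof is correct and takes essentially the same approach as the paper: part i) via the Weierstrass M-test with uniform Gaussian bounds on $\overline{D}$ coming from $\Re((z+2n)^2)\geq c\,n^2$ for large $|n|$, and part ii) via the same splitting into finitely many central indices (controlled by compactness of $\mathcal{F}\subset D$ together with positivity of $\Re(z^2)$ and $\Re((z-2)^2)$ on $D$) plus a Gaussian tail summed uniformly over $\overline{D}$. The only cosmetic difference is the handling of the time integral: the paper evaluates it exactly through the substitution $\rho=a_n/(2s)$, obtaining an incomplete-gamma bound of the form $(1+A)e^{-A}$ that already carries the Gaussian decay, whereas you preserve that decay via the splitting $e^{-a_n/(2s)}\le e^{-a_n/(4t)}e^{-a_n/(4s)}$ before extending the integral to $(0,\infty)$; both devices accomplish the same thing.
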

\begin{proof}
For $n\leq -2$ we have that $n^2 \geq  - 2nx$ if $0\leq x \leq 1$, and $3n^2 \geq 4-4x-4nx$ if $1\leq x \leq 2$; therefore
\begin{equation}\label{fundamental}
|z+2n|\left|e^{-\frac{(z+2n)^2}{4t}}\right| \leq \left\{  \begin{array}{ll}
4ne^{-\frac{n^2}{t}}, &  n\geq 1, \quad z\in \overline{D},  \\
2e^{-\frac{\Re(z^2)}{4t}}, &  n=0, \quad z \in D,\\
2  e^{-\frac{\Re((z-2)^2)}{4t}}, &  n=-1, \quad z\in D,\\
4|n|e^{-\frac{n^2}{4t}}, &  n\leq -2, \quad z\in \overline{D}.
\end{array}   \right .
\end{equation}
Since $e^{-s}\leq C_{\sigma}s^{-\sigma}$ for all $s,\sigma >0$, together the Weierstrass M-test imply the series in (\ref{corazon}) converges absolutely and uniformly on $\overline{D}$, and the result  \textit{i)} follows.\\  

For $n\in \mathbb{Z}\backslash \{-1,0\}$, $z\in \overline{D}$ we have
$$\int_0^t  \left |( \partial_x K )(z+2n,t-\tau) \right |^2  d\tau \leq \frac{4}{\pi n^2}\int_{n^2/(2t)}^{\infty}\rho e^{-\rho}d\rho\leq \frac{4}{\pi}\left(1+\frac{1}{t}\right)e^{-\frac{n^2}{2t}}.$$

Let $z_0 \in \mathcal{F}$ be such that $\Re (z_0^2)= \min_{z\in \mathcal{F}} \Re( z^2)$, therefore
$$\int_0^t  \left |( \partial_x K )(z,t-\tau) \right |^2  d\tau  \leq \frac{1}{\pi(\Re (z_0^2))^2}\int_{\Re (z_0^2)/(2t)}^{\infty} \rho e^{-\rho}d\rho = \frac{1}{\pi(\Re (z_0^2))^2}\left(1+\frac{\Re (z_0^2)}{2t}\right)e^{-\frac{\Re(z_0^2)}{2t}}$$
for all $z\in \mathcal{F}$.\\

In a similar way, let $z_1 \in \mathcal{F}$ be such that $\Re ((z_1-2)^2)= \min_{z\in \mathcal{F}} \Re( (z-2)^2)$, therefore
$$\int_0^t  \left |( \partial_x K )(z-2,t-\tau) \right |^2  d\tau  \leq  \frac{1}{\pi(\Re ((z_1-2)^2))^2}\left(1+\frac{\Re ((z_1-2)^2)}{2t}\right)e^{-\frac{\Re((z_1-2)^2)}{2t}}$$
for all $z\in \mathcal{F}$.
\end{proof}

\begin{remark}\label{perimpar}
In fact, by making an easy modification in the last proof we get that $(\partial_x\theta)(\cdot,t)\in hol(D_\infty) $. Clearly, for each $t>0$ fixed the function $(\partial_x \theta)(\cdot,t)$ fulfills the following functional equations
\begin{equation}\label{ecuacionesfunc}
f(z+2)=f(z)=-f(-z), \quad z\in D_\infty.
\end{equation}
\end{remark}

\begin{remark}\label{paraprincipal}
Let $t>0$ fixed and $u\in L^2_{\mathbb{C}}(0,t)$. Lemma \ref{estheta} together Morera and Fubini's theorems imply that the continuous function (by (\ref{fundamental}) and the dominated convergence theorem)
$$z\mapsto \int_0^t u(\tau)(\partial_x \theta)(z,t-\tau)d\tau$$
is holomorphic on $D_\infty$ and satisfies the functional equations in (\ref{ecuacionesfunc}). 
\end{remark}

We write $\overline{D}$ for the closure of the set $D$.
\begin{proposition}\label{convergiendo}
Let $t>0$ fixed. The series introduced in (\ref{kernelazo}) converges absolutely and uniformly on $\overline{D} \times D$ (or $D \times \overline{D}$). Thus $\mathcal{K}_\ell(\cdot,w;t)$ is an analytic function on $D$, and $\mathcal{K}_\ell(z,\cdot;t)$ is an anti-analytic function on $D$.
\end{proposition}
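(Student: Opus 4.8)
The plan is to reduce the convergence of the double series to the one-variable estimates already established in Lemma \ref{estheta}, after separating the Gaussian factors from the rational coupling factor. Write $a_n:=(z+2n)^2$ and $b_m:=\overline{(w+2m)}^2$, so that the general term of (\ref{kernelazo}) reads
$$\mathcal{K}_0(z+2n,w+2m;t)=\frac{(z+2n)\overline{(w+2m)}}{\pi}\,e^{-\frac{a_n+b_m}{4t}}\left(\frac{1}{(a_n+b_m)^2}+\frac{1}{4t(a_n+b_m)}\right).$$
First I would record the sign information behind (\ref{fundamental}): for $z=x+iy\in\overline D$ one has $|y|\le x$ and $|y|\le 2-x$, whence $|x+2n|\ge|y|$ for every $n\in\mathbb{Z}$, so that $\Re a_n=(x+2n)^2-y^2\ge 0$ on $\overline D$ (and $>0$ on $D$). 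The same computation with strict inequalities gives $\Re b_m=\Re((w+2m)^2)>0$ for every $m\in\mathbb{Z}$ and $w\in D$. Consequently $\Re(a_n+b_m)=\Re a_n+\Re b_m>0$ on $\overline D\times D$, so the denominators never vanish, each summand is a well-defined analytic function of $z$ on $D$ and an anti-analytic function of $w$, and $|e^{-(a_n+b_m)/4t}|=e^{-\Re a_n/4t}\,e^{-\Re b_m/4t}$.

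Next I would turn this into a separated bound. Since $|a_n+b_m|\ge\Re(a_n+b_m)\ge\Re b_m$, the coupling factor is controlled, for a fixed $w\in D$, by
$$\left|\frac{1}{(a_n+b_m)^2}+\frac{1}{4t(a_n+b_m)}\right|\le\frac{1}{\delta_w^2}+\frac{1}{4t\,\delta_w},\qquad \delta_w:=\inf_{m\in\mathbb{Z}}\Re((w+2m)^2)>0,$$
the infimum being positive because $\Re((w+2m)^2)\to\infty$ as $|m|\to\infty$ while each value is strictly positive. This bound is uniform in $z\in\overline D$ and in $n,m$; writing $P_n:=|z+2n|\,e^{-\Re a_n/4t}$ and $Q_m:=|w+2m|\,e^{-\Re b_m/4t}$ (precisely the quantities bounded in (\ref{fundamental})), I obtain
$$\sum_{m,n\in\mathbb{Z}}\bigl|\mathcal{K}_0(z+2n,w+2m;t)\bigr|\le\frac{1}{\pi}\left(\frac{1}{\delta_w^2}+\frac{1}{4t\,\delta_w}\right)\left(\sum_{n\in\mathbb{Z}}P_n\right)\left(\sum_{m\in\mathbb{Z}}Q_m\right).$$

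To finish I would invoke (\ref{fundamental}) for the one-variable sums: it gives $P_n\le 4n\,e^{-n^2/t}$ for $n\ge 1$, $P_n\le 4|n|\,e^{-n^2/4t}$ for $n\le -2$, and $P_0,P_{-1}\le 2$ on $\overline D$, all summable and independent of $z$; the same reasoning in the $w$ variable bounds $\sum_m Q_m$. Hence for each fixed $w\in D$ the series converges absolutely and uniformly for $z\in\overline D$ (and symmetrically, for each fixed $z\in D$, uniformly in $w\in\overline D$). Since the partial sums are analytic in $z$ on $D$ and continuous on $\overline D$, the standard theorem on locally uniform limits of analytic functions shows $\mathcal{K}_\ell(\cdot,w;t)$ is analytic on $D$; as each summand is anti-analytic in $w$, the same argument shows $\mathcal{K}_\ell(z,\cdot;t)$ is anti-analytic on $D$.

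The main obstacle is the rational coupling factor: unlike the Gaussian factors it does not decay, and worse, its denominator degenerates at $\partial D$, so no uniform bound is available on $\overline D\times\overline D$. The whole argument hinges on the observation that fixing the second variable in the \emph{open} set $D$ produces a uniform positive lower bound $\delta_w$ on $\Re(a_n+b_m)$ valid for all $z\in\overline D$ and all indices; this is exactly why the statement is phrased on $\overline D\times D$ (or $D\times\overline D$) and is what decouples the estimate into the already-known one-dimensional sums.
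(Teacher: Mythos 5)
Your proof is correct, and it takes a genuinely different route from the paper's. The paper splits the index set: for $|n|,|m|\ge 3$ it proves the algebraic bound $\left|(z+2n)^2+(\overline{w}+2m)^2\right|\ge 4\left(n^2+m^2-2|n|-2|m|-2\right)\ge 16$ on $\overline{D}\times\overline{D}$ and combines it with (\ref{fundamental}) to get absolute and uniform convergence of that part of the sum; for the remaining indices it runs a geometric case analysis (reducing to $n,m\in\{0,-1\}$ and using $\overline{D}\cap(\pm iD)=\emptyset$, $(\overline{D}-2)\cap(\pm i(D-2))=\emptyset$, etc.) to show that no denominator vanishes on $\overline{D}\times D$. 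Your single sign observation $\Re\bigl((z+2n)^2\bigr)\ge 0$ on $\overline{D}$ for every $n$ (strict on $D$) replaces both ingredients at once: it gives the lower bound $|a_n+b_m|\ge\delta_w>0$ valid for every index pair, hence simultaneously the non-vanishing of all denominators and a product bound $P_nQ_m$ that reduces the double sum to the two one-variable sums controlled by (\ref{fundamental}). This buys a cleaner, unified estimate that in particular covers the mixed terms ($|n|\le 2$ with $|m|$ large, and vice versa), which the paper's written dichotomy --- tail with both indices $\ge 3$ plus pointwise non-vanishing --- never explicitly sums. The trade-off is that the paper's tail constant $16$ is independent of $w$, so its estimate is jointly uniform on $\overline{D}\times\overline{D}$, whereas your $\delta_w$ degenerates as $w\to\partial D$; what you literally prove is therefore uniform convergence in $z\in\overline{D}$ for each fixed $w\in D$ (and symmetrically), rather than joint uniformity of the tails over $\overline{D}\times D$. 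That separate-variable uniformity is exactly what the analyticity and anti-analyticity conclusions, and the application in (\ref{cuentita}), require; if one insists on joint uniformity, it can be recovered by merging your sign observation with an index splitting, e.g.\ $\Re\bigl((\overline{w}+2m)^2\bigr)\ge 15$ for $|m|\ge 3$ uniformly in $w\in\overline{D}$, so nothing essential is lost.
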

\begin{proof}
For $|n|,|m|\geq 3$ and $z,w\in \overline{D}$ we have that 
$$\left|(z+2n)^2+(\overline{w}+2m)^2\right|\geq 4(n^2+m^2-2|n|-2|m|-2)\geq 16.$$
By using (\ref{fundamental}) and the last inequality we have the series defining $\mathcal{K}_\ell(\cdot,\cdot;t)$ converges absolutely and uniformly on $\overline{D}\times \overline{D}$ whenever we sum over all the indexes satisfying $|n|,|m|\geq 3$.\\

Now suppose that there exist $z\in\overline{D}, w \in D$ such that $(z+2n)^2+(\overline{w}+2m)^2=0$. Then $z+2n=\pm i(\overline{w}+2m)$, thus $|2n+\Re z|=| \Im w|\leq1$, so $n=0$ or $n=-1$. By simmetry, we also have $m=0$ or $m=-1$. If $n=m=0$ then $z=\pm i \overline{w} $, which is a contradiction because $\overline{D}\cap (\pm iD)= \emptyset$. In any case, we get a contradiction because $(\overline{D}-2)\cap (\pm i(D-2))= \emptyset$, $(\overline{D}-2)\cap (\pm iD)= \emptyset$ and $\overline{D}\cap (\pm i(D-2))= \emptyset$. This completes the proof. 
\end{proof}

Let $\mathcal{F}(E)$ be the vector space consisting of all complex-valued functions on a set $E$, and let $(\mathcal{H},\langle \cdot, \cdot\rangle_{\mathcal{H}})$ be a Hilbert space. For a mapping $\mathbf{h}:E\rightarrow \mathcal{H}$, consider the induced linear mapping $\mathbf{L}:\mathcal{H}\rightarrow \mathcal{F}(E)$ defined by
$$\mathbf{L}\mathbf{f}(p)= 􏰔\langle\mathbf{f} , \mathbf{h}(p)\rangle_{\mathcal{H}}.$$

The vector space $\mathcal{R}(\mathbf{L}):=\{\mathbf{L}\mathbf{f}:\mathbf{f}\in \mathcal{H}\}$ is endowed with the norm
$$\|f\|_{\mathcal{R}(\mathbf{L})}=\inf\{\|\mathbf{f}\|_{\mathcal{H}}:\mathbf{f}\in \mathcal{H}, f=\mathbf{L}(\mathbf{f})\}.$$
A fundamental problem about the linear mapping $\mathbf{L}$ is to characterize the vector space $\mathcal{R}(\mathbf{L})$. The following result summarizes Theorems 2.36, 2.37 in \cite[pages 135--137]{saitohnuevo} and provides an answer to the last question.

\begin{theorem}\label{saitochi}
\begin{enumerate}
\item $(\mathcal{R}(\mathbf{L}),\|\cdot\|_{\mathcal{R}(\mathbf{L})})$ is a RKHS with reproducing kernel
$$\mathbf{K}(p,q)=\langle \mathbf{h}(q),\mathbf{h}(p) \rangle_{\mathcal{H}}, \quad p,q \in E.$$
\item The mapping $\mathbf{L}:\mathcal{H}\rightarrow \mathcal{R}(\mathbf{L})$ is a surjective bounded operator with operator norm less than 1.
\end{enumerate}
\end{theorem}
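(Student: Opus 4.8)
The plan is to realize $\mathcal{R}(\mathbf{L})$ as an isometric copy of a closed subspace of $\mathcal{H}$ and then read off the reproducing kernel. First I would set $N:=\ker \mathbf{L}$. Since $\mathbf{L}\mathbf{f}(p)=\langle \mathbf{f},\mathbf{h}(p)\rangle_{\mathcal{H}}$, a vector $\mathbf{f}$ lies in $N$ precisely when $\langle\mathbf{f},\mathbf{h}(p)\rangle_{\mathcal{H}}=0$ for every $p\in E$; that is, $N=\{\mathbf{h}(p):p\in E\}^{\perp}$, which is closed, so $N^\perp=\overline{\operatorname{span}}\{\mathbf{h}(p):p\in E\}$ and $\mathcal{H}=N\oplus N^\perp$.

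Next I would analyze the infimum defining $\|\cdot\|_{\mathcal{R}(\mathbf{L})}$. The preimage of a given $f\in\mathcal{R}(\mathbf{L})$ under $\mathbf{L}$ is an affine coset $\mathbf{f}_0+N$; writing a preimage as $\mathbf{f}=\mathbf{f}_0+\mathbf{n}$ with $\mathbf{f}_0\in N^\perp$ and $\mathbf{n}\in N$ gives $\|\mathbf{f}\|_{\mathcal{H}}^2=\|\mathbf{f}_0\|_{\mathcal{H}}^2+\|\mathbf{n}\|_{\mathcal{H}}^2$, so the infimum is attained at the unique representative $\mathbf{f}_0\in N^\perp$ and equals $\|\mathbf{f}_0\|_{\mathcal{H}}$. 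Consequently $\mathbf{L}|_{N^\perp}$ is a linear bijection onto $\mathcal{R}(\mathbf{L})$ that is isometric for $\|\cdot\|_{\mathcal{R}(\mathbf{L})}$. I would then define the inner product on $\mathcal{R}(\mathbf{L})$ by transporting that of $N^\perp$, namely $\langle f,g\rangle_{\mathcal{R}(\mathbf{L})}:=\langle \mathbf{f}_0,\mathbf{g}_0\rangle_{\mathcal{H}}$ with $\mathbf{f}_0,\mathbf{g}_0$ the minimal-norm representatives; its induced norm is $\|\cdot\|_{\mathcal{R}(\mathbf{L})}$, and completeness of $\mathcal{R}(\mathbf{L})$ follows because $N^\perp$ is a closed subspace of a Hilbert space and $\mathbf{L}|_{N^\perp}$ is a unitary. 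This already yields part (2): $\mathbf{L}$ is surjective onto $\mathcal{R}(\mathbf{L})$ by construction, and the infimum definition gives $\|\mathbf{L}\mathbf{f}\|_{\mathcal{R}(\mathbf{L})}\le\|\mathbf{f}\|_{\mathcal{H}}$, so $\|\mathbf{L}\|\le 1$.

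For part (1) I would first note that $\mathbf{K}(\cdot,p)$ belongs to $\mathcal{R}(\mathbf{L})$: evaluating $\mathbf{L}(\mathbf{h}(p))$ at $q$ gives $\langle \mathbf{h}(p),\mathbf{h}(q)\rangle_{\mathcal{H}}=\mathbf{K}(q,p)$, whence $\mathbf{K}(\cdot,p)=\mathbf{L}(\mathbf{h}(p))$. The crucial observation is that $\mathbf{h}(p)$ itself lies in $N^\perp$, so it is its own minimal-norm representative. Therefore, for any $f=\mathbf{L}\mathbf{f}\in\mathcal{R}(\mathbf{L})$ with minimal representative $\mathbf{f}_0$, the transported inner product gives $\langle f,\mathbf{K}(\cdot,p)\rangle_{\mathcal{R}(\mathbf{L})}=\langle \mathbf{f}_0,\mathbf{h}(p)\rangle_{\mathcal{H}}$; since $\mathbf{f}-\mathbf{f}_0\in N$, which is orthogonal to $\mathbf{h}(p)$, this equals $\langle \mathbf{f},\mathbf{h}(p)\rangle_{\mathcal{H}}=(\mathbf{L}\mathbf{f})(p)=f(p)$. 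This is exactly the reproducing property; combined with the Cauchy--Schwarz bound $|f(p)|\le\|\mathbf{K}(\cdot,p)\|_{\mathcal{R}(\mathbf{L})}\,\|f\|_{\mathcal{R}(\mathbf{L})}$, it shows every evaluation functional is bounded, so $\mathcal{R}(\mathbf{L})$ is a RKHS with kernel $\mathbf{K}$.

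There is no serious obstacle here; the argument is structural rather than computational. The only point requiring genuine care is the interplay between the infimum norm and the orthogonal projection: one must verify that minimal-norm preimages are precisely the elements of $N^\perp$, so that the abstractly defined $\|\cdot\|_{\mathcal{R}(\mathbf{L})}$ really arises from the transported inner product and the resulting space is complete. Once this identification is secured, the membership $\mathbf{h}(p)\in N^\perp$ makes the reproducing property fall out immediately.
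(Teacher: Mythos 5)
Your proof is correct and complete: the identification of minimal-norm preimages with elements of $N^{\perp}$, the transported inner product, and the observation that $\mathbf{h}(p)\in N^{\perp}$ give exactly the reproducing property, and part (2) follows from the infimum definition. Note that the paper itself offers no proof of this theorem --- it is stated as a summary of Theorems 2.36--2.37 of the cited Saitoh--Sawano book --- and your orthogonal-decomposition argument is essentially the standard one found there; the only wording to adjust is that ``operator norm less than 1'' must be read as ``at most 1,'' which is what you actually prove (indeed, since $\mathbf{L}|_{N^{\perp}}$ is isometric, the operator norm equals 1 whenever $\mathbf{h}$ is not identically zero).
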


\section{Proofs of the results}\label{probando}
\begin{proof}[Proof of Theorem \ref{main}]
By (\ref{opintegral}) and Remark \ref{paraprincipal} we have that $w(\cdot,T)\in hol(D_\infty)$ and fulfills the functional equations in (\ref{ecuacionesfunc}).\\

Lemma \ref{estheta} \textit{ii)} implies that the function $h:D \rightarrow L^2_{\mathbb{C}}(0,T)$ given by

\begin{equation}\label{achecita}
h_z(t)=\overline{(\partial_x\theta)(z,T-t)}, \quad t \in (0,T),
\end{equation}
makes sense, and Remark \ref{paraprincipal} implies that the linear operator $\mathcal{L}_T^{\ell}:L^2_{\mathbb{C}}(0,T)\rightarrow hol(D)$ given by
$$(\mathcal{L}_T^\ell u)(z)=\langle u, h_z \rangle_{L^2_{\mathbb{C}}(0,T)}, \quad z\in D,$$
is well defined. By (\ref{opintegral}) we have
$$w(z,T)=-2(\mathcal{L}_T^\ell u_\ell)(z), \, z\in D,\quad u_\ell\in L^2_{\mathbb{C}}(0,T).$$

Theorem \ref{saitochi} implies that $\mathcal{H}^\ell_T(D):=\mathcal{R}(\mathcal{L}_T^\ell)$ is a RKHS on $D$ with reproducing kernel
$$\mathcal{K}^*(z,w;T)=\langle h_w,h_z\rangle_{L^2_{\mathbb{C}}(0,T)}.$$
The inequality in (\ref{normafinita}), the dominated convergence theorem and  Proposition \ref{convergiendo} allow us to compute
\begin{eqnarray}\label{cuentita}
\mathcal{K}^*(z,w;T)&=&\lim_{N,M\rightarrow\infty}\sum_{|n|\leq N}\sum_{ |m|\leq M}\frac{(z+2n)(\overline{w}+2m)}{16\pi}\int_0^T\frac{1}{(T-t)^3} e^{-\frac{(z+2n)^2}{4(T-t)}-\frac{(\overline{w}+2m)^2}{4(T-t)} }dt \notag \\
&=& \lim_{N,M\rightarrow\infty}\sum_{|n|\leq N}\sum_{ |m|\leq M}\mathcal{K}_0(z+2n,w+2m;T)=\mathcal{K}_\ell(z,w;T).
\end{eqnarray}
We also have
\begin{equation}\label{norma1}
\|w(\cdot,T)\|_{\mathcal{H}^\ell_T(D)}=\inf\left\{\|u\|_{L^2_{\mathbb{C}}(0,T)}:w(\cdot,T)=-2\mathcal{L}_T^\ell u, u\in L^2_{\mathbb{C}}(0,T)\right\}.
\end{equation}
\end{proof}

\begin{proof}[Proof of Theorem \ref{derecho}]
We only give a sketch. Let
$\widetilde{h}:-1+D \rightarrow L^2_{\mathbb{C}}(0,T)$ given by
$$\widetilde{h}_z(t)=\overline{(\partial_x\theta)(z+1,T-t)}, \quad t \in (0,T),$$
and the linear operator $\mathcal{L}_T^{r}:L^2_{\mathbb{C}}(0,T)\rightarrow hol(-1+D)$ given by
$$(\mathcal{L}_T^r u)(z)=\langle u, \widetilde{h}_z \rangle_{L^2_{\mathbb{C}}(0,T)}, \quad z\in -1+D.$$
By (\ref{opintegral}) and Remark \ref{perimpar} we have
$$w(z,T)=2(\mathcal{L}_T^r u_r)(z), \quad z\in -1+D,\, u_r\in L^2_{\mathbb{C}}(0,T).$$
Theorem \ref{saitochi} implies that $\mathcal{H}^r_T(-1+D):=\mathcal{R}(\mathcal{L}_T^r)$ is a RKHS on $-1+D$ with reproducing kernel
$$\langle \widetilde{h}_w,\widetilde{h}_z\rangle_{L^2_{\mathbb{C}}(0,T)}=\mathcal{K}_r(z,w;T).$$
We also have
\begin{equation}\label{norma1.5}
\|w(\cdot,T)\|_{\mathcal{H}^r_T(-1+D)}=\inf\left\{\|u\|_{L^2_{\mathbb{C}}(0,T)}:w(\cdot,T)=2\mathcal{L}_T^r u, u\in L^2_{\mathbb{C}}(0,T)\right\}.
\end{equation}
\end{proof}

\begin{remark}
1) Since $\mathcal{K}_\ell(\cdot,w;T)\in \mathcal{H}^\ell_T(D)$ for all $w\in D$  (see \cite[Proposition 2.1, page 71]{saitohnuevo}), we get that the function $\mathcal{K}_\ell(\cdot,y;1) :(0,2)\rightarrow \mathbb{R}$ is in $\mathcal{R}^\ell$ for all $y\in (0,2)$.\\
2) Since $\mathcal{K}_r(\cdot,w;T)\in \mathcal{H}^r_T(D)$ for all $w\in D$, we get that the function $\mathcal{K}_r(\cdot,y;1) :(0,2)\rightarrow \mathbb{R}$ is in $\mathcal{R}^r$ for all $y\in (0,2)$.
\end{remark}

\begin{proof}[Proof of Theorem \ref{laterales}]
\textit{(1)} We set $u_r=-u_\ell$ in (\ref{opintegral}) to get
\begin{eqnarray*}
w(x,T)&=&-2\int_0^T [(\partial_x \theta)(x, T-\tau)+(\partial_x \theta)(x-1, T-\tau)]u_\ell (\tau)d\tau \\
&=& -2\int_0^T (\partial_x \widetilde{\theta})(x, T-\tau)u_\ell (\tau)d\tau
\end{eqnarray*}
where 
$$\widetilde{\theta}(x,t)=\sum_{n\in \mathbb{Z}} K(x+n,t), \quad (x,t)\in \mathbb{R}^2_+.$$
For $t>0$ fixed, clearly the function $(\partial_x\widetilde{\theta})(z,t)$ has similar properties to the analytic theta function $(\partial_x \theta)(z,t)$ in Lemma \ref{estheta}, and also satisfies the following functional equations,
$$f(z+1)=f(z)=-f(-z), \quad z\in Q_\infty.$$
Therefore, $w(\cdot,T)\in hol(Q_\infty)$ and fulfills the last functional equations.\\

Now we proceed as in the proof of Theorem \ref{main}: consider the function $h^+:Q \rightarrow L^2_{\mathbb{C}}(0,T)$ given by

\begin{eqnarray*}
h_z^+(t)&=&\overline{(\partial_x\tilde{\theta})(z,T-t)}\\
&=&\overline{(\partial_x \theta)(z, T-\tau)}+\overline{(\partial_x \theta)(z+1, T-\tau)},\quad t \in (0,T),
\end{eqnarray*}
and the linear operator $\mathcal{L}_T^+:L^2_{\mathbb{C}}(0,T)\rightarrow hol(Q)$ given by
$$(\mathcal{L}_T^+ u)(z)=\langle u, h_z^+ \rangle_{L^2_{\mathbb{C}}(0,T)}, \quad z\in Q.$$
By (\ref{opintegral}) we have
$$w(z,T)=-2(\mathcal{L}_T^+ u_\ell)(z), \quad z\in Q, \,u_\ell\in L^2_{\mathbb{C}}(0,T).$$

Theorem \ref{saitochi} implies that $\mathcal{H}^+_T(D):=\mathcal{R}(\mathcal{L}_T^+)$ is a RKHS on $Q$ with reproducing kernel (the computation is similar to (\ref{cuentita})) 
$$\langle h_w^+,h_z^+\rangle_{L^2_{\mathbb{C}}(0,T)}=\mathcal{K}^+(z,w;T).$$
By the other hand
\begin{eqnarray*}
\langle h_w^+,h_z^+\rangle_{L^2_{\mathbb{C}}(0,T)}&=& \langle h_w,h_z\rangle_{L^2_{\mathbb{C}}(0,T)}+\langle h_{w+1},h_{z+1}\rangle_{L^2_{\mathbb{C}}(0,T)}
+\langle h_{w+1},h_z\rangle_{L^2_{\mathbb{C}}(0,T)}+\langle h_{w},h_{z+1}\rangle_{L^2_{\mathbb{C}}(0,T)}\\
&=& \mathcal{K}_\ell(z,w;T)+\mathcal{K}_\ell(z+1,w+1;T)+\mathcal{K}_\ell(z,w+1;T)+\mathcal{K}_\ell(z+1,w;T), 
\end{eqnarray*}
for $z,w \in Q$, where $h$ is the function in (\ref{achecita}).\\

We also have
\begin{equation}\label{norma2}
\|w(\cdot,T)\|_{\mathcal{H}^+_T(D)}=\inf\left\{\|u\|_{L^2_{\mathbb{C}}(0,T)}:w(\cdot,T)=-2\mathcal{L}_T^+ u, u\in L^2_{\mathbb{C}}(0,T)\right\}.
\end{equation}

\textit{(2)} We set $u_r=u_\ell$ in (\ref{opintegral}) to get
\begin{eqnarray*}
w(x,T)&=& -2\int_0^T [(\partial_x \theta)(x, T-\tau)-(\partial_x \theta)(x-1, T-\tau)]u_\ell (\tau)d\tau\\
&=& -2\int_0^T [(\partial_x \theta)(x, T-\tau)-(\partial_x \theta)(x+1, T-\tau)]u_\ell (\tau)d\tau
\end{eqnarray*}

By Lemma \ref{estheta} and Remark \ref{perimpar} we have
$$(\partial_x \theta)(\cdot, t)-(\partial_x \theta)(\cdot+1, t)\in hol(Q_\infty), \quad \text{for all }t>0,$$
and satisfies the functional equations
$$-f(z+1)=f(z)=-f(-z), \quad z\in Q_\infty.$$
Therefore, $w(\cdot,T)\in hol(Q_\infty)$ and fulfills the last functional equations.\\

Consider the function $h^-:Q \rightarrow L^2_{\mathbb{C}}(0,T)$ given by
$$h_z^-(t)=\overline{(\partial_x \theta)(z, T-t)}-\overline{(\partial_x \theta)(z+1, T-t)}, \quad t \in (0,T),$$
and the linear operator $\mathcal{L}_T^-:L^2_{\mathbb{C}}(0,T)\rightarrow H(Q)$ given by
$$(\mathcal{L}_T^- u)(z)=\langle u, h_z^- \rangle_{L^2_{\mathbb{C}}(0,T)}, \quad z\in Q.$$
Theorem \ref{saitochi} implies that $\mathcal{H}^-_T(D):=\mathcal{R}(\mathcal{L}_T^-)$ is a RKHS on $Q$ with reproducing kernel  
\begin{eqnarray*}
\langle h_w^-,h_z^-\rangle_{L^2_{\mathbb{C}}(0,T)}&=& \langle h_w,h_z\rangle_{L^2_{\mathbb{C}}(0,T)}+\langle h_{w+1},h_{z+1}\rangle_{L^2_{\mathbb{C}}(0,T)}
-\langle h_{w+1},h_z\rangle_{L^2_{\mathbb{C}}(0,T)}-\langle h_{w},h_{z+1}\rangle_{L^2_{\mathbb{C}}(0,T)}\\
&=& \mathcal{K}_\ell(z,w;T)+\mathcal{K}_\ell(z+1,w+1;T)-\mathcal{K}_\ell(z,w+1;T)-\mathcal{K}_\ell(z+1,w;T), 
\end{eqnarray*}
for $z,w \in Q$, where $h$ is the function in (\ref{achecita}).\\

We also have
\begin{equation}\label{norma3}
\|w(\cdot,T)\|_{\mathcal{H}^-_T(D)}=\inf\left\{\|u\|_{L^2_{\mathbb{C}}(0,T)}:w(\cdot,T)=-2\mathcal{L}_T^- u, u\in L^2_{\mathbb{C}}(0,T)\right\}.
\end{equation}
\end{proof}

\begin{remark}
1) Since $\mathcal{K}_+(\cdot,w;T)\in \mathcal{H}^+_T(Q)$ for all $w\in Q$, we get that the function $\mathcal{K}_+(\cdot,y;1) :(0,1)\rightarrow \mathbb{R}$ is in $\mathcal{R}^+$ for all $y\in (0,1)$.\\
2) Since $\mathcal{K}_-(\cdot,w;T)\in \mathcal{H}^-_T(D)$ for all $w\in Q$, we get that the function $\mathcal{K}_-(\cdot,y;1) :(0,1)\rightarrow \mathbb{R}$ is in $\mathcal{R}^-$ for all $y\in (0,1)$.
\end{remark}

\begin{proof}[Proof of Theorem \ref{masmenos}] Let $u_\ell,u_r\in L^2_{\mathbb{C}}(0,T)$. By (\ref{opintegral}) we have
$$-w(z,T)=\mathcal{L}_T^+[u_\ell-u_r](z)+\mathcal{L}_T^-[u_\ell+u_r](z), \quad z\in Q.$$
Therefore $w(\cdot,T)\in \mathcal{H}^+_T(D)+\mathcal{H}^-_T(D)$. Since $ \mathcal{H}^+_T(D)$ and $\mathcal{H}^-_T(D)$ are RKHS on $Q$ with reproducing kernels $\mathcal{K}^+_T(z,w;T)$ and $ \mathcal{K}^-_T(z,w;T)$ respectively, it follows that $\mathcal{H}^+_T(D)+\mathcal{H}^-_T(D)$ is a RKHS with reproducing kernel 
$$\mathcal{K}^+_T(z,w;T)+ \mathcal{K}^-_T(z,w;T)=2\mathcal{K}_\ell(z,w;T)+2\mathcal{K}_\ell(z+1,w+1;T),$$ 
$z,w\in Q$, and is equipped with the norm (see \cite[page 93]{saitohnuevo}) 
\begin{equation}\label{sumaesp}
\|f\|^2=\min\{\|f_1\|^2_{\mathcal{H}^+_T(D)}+\|f_2\|^2_{\mathcal{H}^-_T(D)}:f=f_1+f_2, f_1\in \mathcal{H}^+_T(D),f_2\in \mathcal{H}^-_T(D)\}.
\end{equation}
\end{proof}

\begin{proof}[Proof of Corollary \ref{neumann}]
If $v\in C([0,T];L^2_{\mathbb{C}}(0,1)$ is solution of system (\ref{flujo}), with $u_\ell,u_r\in L^2_{\mathbb{C}}(0,T)$, then $(\partial_x v)(x,t)$ is solution of system (\ref{principal}), therefore $(\partial_x v)(z,t)=\frac{d}{dz}(v(z,t)) \in \mathcal{R}$, $0<t<T$.
\end{proof}

\begin{proof}[Proof of Theorem \ref{iniciando}]
When $u\in C_{\mathbb{C}}((0,T])$ the solution of system (\ref{basico}) is 
\begin{equation}\label{abcd}
v(x,T)=-2\int_0^T  (\partial_x K)(x,T-t)u(t)dt.
\end{equation}

As in the proof of (\ref{fundamental}, case $n=0$), we have that for any compact set $\mathcal{F}\subset \Delta$ and $T>0$, there exist a constant $C_{\mathcal{F},T}>0$
such that 
$$\int_0^T |(\partial_x K)(z,T-t)|^2dt\leq C_{\mathcal{F},T},\quad z\in \mathcal{F}.$$
Hence the integral operator in (\ref{abcd}) is well defined for $u\in L^2_{\mathbb{C}}(0,T)$. Fubini and Moreras's theorems imply that the continuous function
$$z\mapsto \int_0^T  (\partial_x K)(z,T-t)u(t)dt$$
is analytic on $\Delta$.

Consider the function $h^q:\Delta \rightarrow L^2_{\mathbb{C}}(0,T)$ given by
$$h^q_z(t)=\overline{(\partial_x K)(z,T-t)}, \quad t \in (0,T),$$
and the linear operator $\mathcal{L}_T^q:L^2_{\mathbb{C}}(0,T)\rightarrow hol(\Delta)$ given by
$$(\mathcal{L}_T^q u)(z)=\langle u, h_z^q \rangle_{L^2_{\mathbb{C}}(0,T)}, \quad z\in \Delta.$$
Theorem \ref{saitochi} implies that $\mathcal{R}(\mathcal{L}_T^q)$ is a RKHS on $\Delta$ with reproducing kernel 
\begin{eqnarray*}
\mathcal{K}^q(z,w;T)&=&\langle h_w^q,h_z^q\rangle_{L^2_{\mathbb{C}}(0,T)}\\
&=&\frac{z\overline{w}}{16\pi}  \int_0^T \frac{e^{-\frac{z^2}{4(T-t)}}}{(T-t)^{3/2}} \frac{e^{-\frac{\overline{w}^2}{4(T-t)}}}{(T-t)^{3/2}}dt\\
&=&\frac{1}{4\pi} e^{-\frac{z^2}{4T}}e^{-\frac{\overline{w}^2}{4T}}\left(\frac{4z\overline{w}}{ (z^2+\overline{w}^2)^2} + \frac{z\overline{w}}{  T(z^2+\overline{w}^2)}\right).
\end{eqnarray*}

By Remark \ref{pullback} and Corollary 2.5 in \cite[page 107]{saitohnuevo} we have that $e^{-z^2/4}A^2(\Delta)$ is a RKHS with reproducing kernel
$$e^{-\frac{z^2}{4}}e^{-\frac{\overline{w}^2}{4}}\frac{4z\overline{w}}{ (z^2+\overline{w}^2)^2}, \quad \text{and} $$
$$\langle e^{-\frac{z^2}{4}}f,e^{-\frac{z^2}{4}}g\rangle_{e^{-z^2/4}A^2(\Delta)} =\langle f,g\rangle_{A^2(\Delta)}.$$
By Lemma \ref{hardpull} and (\ref{normpull}) we have that $ze^{-z^2/4}\mathcal{H}_{\varphi}(\Delta)$
is a RKHS with reproducing kernel
$$e^{-\frac{z^2}{4}}e^{-\frac{\overline{w}^2}{4}}\frac{z\overline{w}}{ z^2+\overline{w}^2}, \quad \text{and} $$
for all $f,g \in H^2(\mathbb{C}^+)$ we have
$$\langle ze^{-\frac{z^2}{4}}f\circ \varphi,ze^{-\frac{z^2}{4}}g\circ \varphi\rangle_{ze^{-z^2/4}\mathcal{H}_{\varphi}(\Delta)} =\langle f\circ \varphi,g\circ \varphi\rangle_{\mathcal{H}_{\varphi}(\Delta)}=\langle f,g\rangle_{H^2(\mathbb{C}^+)}.$$
Therefore $e^{-z^2/4}A^2(\Delta)+ze^{-z^2/4}\mathcal{H}_{\varphi}(\Delta)$ is the RHKS with reproducing kernel $4\pi\mathcal{K}^q(z,w;1)$, and with norm
\begin{equation}\label{sumanorm}
\|f\|^2_*=\min\{\|e^{\frac{z^2}{4}}f_1\|^2_{A^2(\Delta)}+\|z^{-1} e^{\frac{z^2}{4}}f_2\|^2_{\mathcal{H}_{\varphi}(\Delta)}:f=f_1+f_2, f_1\in e^{-\frac{z^2}{4}}A^2(\Delta),f_2\in ze^{-\frac{z^2}{4}}\mathcal{H}_{\varphi}(\Delta)\}.
\end{equation}
Finally, consider the biholomorphism $\psi :\Delta \rightarrow \Delta $ given by $\psi(z)=T^{-1/2}z$. Hence $\mathcal{K}^q(z,w;T)=T\mathcal{K}^q(\psi(z),\psi(w);1)$ is the reproducing kernel of the space (see Theorem 2.9 in \cite[page 81]{saitohnuevo})
$$\{ f\circ \psi  : f\in  e^{-z^2/4}A^2(\Delta)+ze^{-z^2/4}\mathcal{H}_{\varphi}(\Delta) \}=\mathcal{R}(\mathcal{L}_T^q),$$
with norm 
$$\| f\circ \psi \|_{**}=\| f \|_{*}.$$
Thus $ (\mathcal{R}(\mathcal{L}_T^q),\| \cdot \|_{**})$ is isometrically isomorphic to $(e^{-\frac{z^2}{4}}A^2(\Delta)+ze^{-\frac{z^2}{4}}\mathcal{H}_{\varphi}(\Delta), \|\cdot \|_{*})$ for all $T>0$. As a vector space $\mathcal{R}(\mathcal{L}_T^q)=e^{-\frac{z^2}{4}}A^2(\Delta)+ze^{-\frac{z^2}{4}}\mathcal{H}_{\varphi}(\Delta)$ for all $T>0$.
\end{proof}

\begin{remark}
The integral operator in (\ref{abcd}) is the operator $\widetilde{\Phi}_T$ in \cite{hart,orsini}.  
\end{remark}

\section*{Conclusion}
It is well known that any RKHS is determined by its reproducing kernel, unfortunaly in our cases we cannot write the reproducing kernels in terms of  ``elementary functions". Here we have a fundamental problem: is it posible to found a more manageable description for the RKHSs $(\mathcal{H}^\ell_T(D), \|\cdot \|_{\mathcal{H}_T^\ell(D)}), (\mathcal{H}_T^+(D), \|\cdot \|_{\mathcal{H}_T^+(D)})$ and $(\mathcal{H}_T^-(D), \|\cdot \|_{\mathcal{H}_T^-(D)})$? The proof of Moore's theorem (see \cite[pages 68-71]{saitohnuevo}) gives an equivalent description of  the latter spaces, but it is hard to handle too. We would like to have a description as in Theorem \ref{iniciando}. \\

Our work shows that there is no unique description for the reachable space $\mathcal{R}$ of the heat equation for a finite rod with two Dirichlet boundary controls. It would be interesting to establish a connection between our characterization and the ones obtained in \cite{kellay,orsini}.


\end{document}